\newcommand{\G}{\mathcal{P}} 
\newcommand{\I}{\mathbb{I}} 
\newcommand{\II}{\mathcal{I}} %
\newcommand{\F}{\mathcal{F}} 
\newcommand{\FF}{\boldsymbol{F}} 
\newcommand{\g}{q} 
\newcommand{\REF}[1]{{\normalfont (\ref{#1})}} 
\newcommand{\virg}[1]{``#1''} 
\newcommand{\R}{\mathbb{R}} 
\newcommand{\df}{\stackrel{\mathrm{def}}{=}} 
\newcommand{\id}{\operatorname{id}} 
\newcommand{\dd}{\operatorname{d}} 
\newcommand{\im}{\operatorname{im}} 
\newcommand{\Hom}{\operatorname{Hom}} 
\newcommand{\Ann}{\operatorname{Ann}} 
\newcommand{\Diff}{\operatorname{Diff}} 
\newcommand{\III}{\boldsymbol{I}} 
\newtheorem{definition}{Definition}
 \newtheorem{lemma}{Lemma}
\newtheorem{remark}{Remark}
\newtheorem{corollary}{Corollary}
\newtheorem{proposition}{Proposition}
\newtheorem{example}{Example}
\newtheorem{theorem}{Theorem}
\begin{document}

\markboth{G.~Moreno}
{On Families in Differential Geometry}

%
%

\title{ON FAMILIES IN DIFFERENTIAL GEOMETRY}

\author{GIOVANNI MORENO}

\address{Mathematical Institute in Opava\\
Silesian University in Opava\\
Na Rybnicku 626/1, 746 01 Opava, Czech Republic\\
 Giovanni.Moreno@math.slu.cz}

\maketitle


\begin{abstract}
Families of  objects appear in several contexts, like algebraic topology, theory of deformations,     theoretical physics, etc. An unified coordinate--free algebraic framework for {\it families of geometrical quantities} is presented here, which   allows one to work without introducing {\it ad hoc} spaces,  by using  the   language of differential calculus over commutative algebras. An  advantage of such an approach, based on the notion of {\it sliceable structures} on cylinders, is that the fundamental theorems of standard calculus  are straightforwardly generalized to the context of families. As an example of that, we prove the  {\it universal}  homotopy formula.
\end{abstract}


 \subjclass{53C15,58A05,58A10,58H15,58C99}


\maketitle
\maketitle

\tableofcontents

\begin{table}[htdp]
\caption{List of main symbols.}
\begin{center}
\begin{tabular}{r|l}

$M$ & a smooth manifold\\

$\G$ & the manifold of paramaters\\

$\I$ & the closed interval $[0,1]$\\

$\iota_g$ & the slicing map\\

$M_g$ & the slice determined by $g\in\G$\\ 

$\Theta$ & a geometrical quantity on $M\times\G$\\

$\Theta_g$ & restriction of $\Theta$ to the slice $M_g$\\

$\{\Theta_g\}_{g\in \G }$ & family determined by $\Theta$\\

$F^\circ(\pi)$ & pull--back bundle\\

$\pi_M$, $\pi_\G$ & canonical projections\\

$D$ & functor of derivations\\

$D_{\pi}^v$ & functor of $\pi$--vertical derivations\\

$D_{\pi}$ & functor of derivations along $\pi$\\

$\Diff$ & functor of differential operators\\

$\Gamma(\pi)$ & module of sections of $\pi$\\

$\Gamma_c(\pi)$ & submodule of compact--supported sections of $\pi$\\

$\widetilde{X}$ & canonical lift of $X$\\

$\nabla_X$ & der--operator associated with $X$\\

$X(F)$ & the derivative of a family of maps $F$\\

$F'$ & infinitesimal homotopy\\

$\II_\R$, $\II_a^b$ & integration operators\\

$\F$ & a functional on $C^\infty(\G)$\\

$Q^\vee$ & the dual of module $Q$\\

$\pi^\vee$ & the dual of bundle $\pi$\\

$\overline{A}$ & smooth envelope of $A$\\

$\mathrm{Spec}(A)$ & spectrum of $A$\\

$P\overline{\otimes}Q$ & smoothened tensor product\\

$\widetilde{\F}$ & canonical lift of functional $\F$\\

$\FF$ & a functor of diffrential calculus\\

$\Phi$ & representing object of $\FF$\\

$\Phi_F$ & $F$--horizontal  sub--module \\

$\Lambda^1_f$ & $f$--horizontal 1--forms\\

$\Lambda^{p,q}$ & forms of type $(p,q)$\\

$p_{p,q}$ & canonical projector\\

$\II_f$ & ideal of $f$--horizontal forms\\

$\overline{\Lambda}_f$ & algebra of $f$--vertical forms\\

$\overline{d}$ & horizontal differential\\

 \end{tabular}
\end{center}
\label{default}
\end{table}%

\newpage

\section{Introduction}
%


Instances of families of geometrical objects can be found in Differential Geometry, where they play an essential role in the proof of key theorems. They are  relevant in Algebraic Geometry as well, but this is not  touched  upon here. The purpose of this paper is to provide a conceptual approach to the theory of  {\it families of geometrical quantities}---a common denomination which unifies 
   such definitions   as families, deformations, homotopies, isotopies, motions, etc.\footnote{It is left to   the reader the task  to particularize the theory to the cases of personal interest.} We ventured calling it \virg{conceptual} mainly for two reasons. First,  it makes self--evident such a  property as smoothness, which, despite its elementar character, it is usually defined in a slightly cumbersome way. Second, it allows  a rigorous and straightforward generalization of    important  theorems of  differential calculus to the context of   families (see, on this concern, the   universal homotopy formula, proved in Section \ref{secHomFor}).\par
Such an approach cannot be obtained without exploiting  the \emph{logic of differential calculus over commutative algebras}, a theory pioneered by A. M. Vinogradov in the seventies (the main ideas can be found in the   papers \cite{Logic1,Logic2}, while   the book \cite{Nestruev} provides an elementary introduction to the subject). Roughly speaking, this \virg{logic} is composed of the so--called \emph{functors of differential calculus} (e.g., differential operators, derivations, etc.), each of which is accompanied by its representative object (e.g., the module of jets, differential forms, etc.). It turns out that  representative objects are themselves functors, but controvariant ones (unlike the functors of differential calculus, which are covariant ones). However, the main difference between functors and representative objects is that the former are absolute, while the latter are relative, i.e., they depend on the module category in which they are defined. For instance, in order to recover the familiar definition of differential forms over a smooth manifold, one has to introduce the category of geometric modules over smooth algebras. But not only differential forms, even  the whole calculus over smooth manifolds constitutes  a chapter of the logic of differential calculus over commutative algebras, they key being provided by the so--called   {\it Spectral Theorem} (see  \cite{Nestruev}),  an isomorphisms  between  the category of smooth manifolds and the category of smooth algebras. In other words, the logic of differential calculus over commutative algebras allows to formalize   any well--known notion of differential calculus over smooth manifolds   in terms of  objects, morphisms, endofunctors and   their representative objects in the categories of smooth algebras and  geometric modules. But, most importantly, it allows to {\it define} notions (from smoothness itself) and theorems (e.g., the Newton--Leibniz formula) of differential calculus in far more general contexts than smooth manifolds (see, for instance, \cite{ActaSpeciale}) and, in the present case, in the context of families.\par 
%
   %
   %
   %
To begin with, introduce the idea of a  geometrical quantity $\Theta_M$ on $M$. Informally speaking, symbol  $\Theta$ denotes the  {\it kind}  of the quantity $\Theta_M$ (which may be a function, a map, or a section of a vector bundle), while index \virg{$M$} is a remainder of the fact that our quantity is associated with $M$ (i.e., it is a function on $M$, a map from $M$ to another manifold, or a vector bundle over $M$). Let now $\g$ be a point of an auxiliary manifold $\G$, henceforth called the {\it manifold of parameters}. A set $\{\Theta_\g\}_{\g\in\G}$   of geometrical quantities of kind $\Theta$ on the manifold $M$ is what is usually referred to as a {\it family} of geometrical quantities (of   kind  $\Theta$) on $M$. However, even from a mere notational point of view, $\{\Theta_\g\}_{\g\in\G}$ is not an happy choice, since the  symbol $\g$, which stands for  a point of an extra manifold, is attached to the symbol $\Theta$, which denotes a geometrical quantity on $M$. Conceptually,  such a notation  immediately reveals its limits, since it is not even able to clarify the relationship between the smoothness of the whole family and the smoothness of each of its member.  So, the first aim of this paper is to   replace the naive idea
\begin{equation}\label{equazioneBASE}
\textrm{geometrical quantity}\ \Theta_M \longmapsto\textrm{family of geometrical quantities}\  \{\Theta_\g\}
\end{equation}
with a more conceptual one
\begin{eqnarray}\label{equazioneBASEconc}
\textrm{geometrical quantity}\ \Theta_M &\longmapsto&\textrm{geometrical quantity on the}   \Theta\\ && \textrm{{\it cylinder}}\ M\times\G\ \textrm{of the same kind as}\ \Theta_M.\nonumber
\end{eqnarray}
The reader should keep in mind that, throughout this paper, we retain the name {\it cylinder} for the cartesian product    $M\times\G$, since one of the most common  instances of families is obtained when the manifold of parameter is $\R$, or an interval in it.  \par
In order to work in the logic of differential calculus over commutative algebras, the idea \REF{equazioneBASEconc} must be adopted as the most  fundamental one, since it express the idea of a family in terms of an {\it algebra extension} $C^\infty(M)\longmapsto C^\infty(M\times\G)$, while the  usual one  \REF{equazioneBASE} can be retained  for more descriptive purposes. This  change of perspective makes it straightforward     to express, in a natural and easy way,    such matters as smoothness, derivation (Section \ref{secDerFam}), integration with respect to a parameter (Sec. \ref{secIntFam}),   of families of geometrical quantities, and other relevant properties (like that of being analytic, algebraic, meromorphic, etc.) which are not investigated here. \par
The second aim of this paper is to introduce and to systematically study what we called the {\it sliceable} quantities on cylinders, whose appearance in the theory of families is     explained as follows. The passage  
 from   \REF{equazioneBASEconc} to \REF{equazioneBASE} inevitably  requires  the   {\it slicing maps} $\iota_\g:p\in M\to (p,\g)\in M\times\G$. This leads to think that   {\it any} geometrical quantity $\Theta$ on the cylinder can be \virg{sliced} into a family $\{\Theta_\g\}_\g$, where $\Theta_\g$ is obtained \virg{by applying} $\iota_\g$ to $\Theta$, and the meaning of \virg{applying} depends one the kind of $\Theta$ (for instance, if $\Theta$ is a map, then $\Theta_\g$ is its pull--back $\iota_\g^\ast(\Theta)$). Now one should notice that,  first, not all quantities on the cylinder can be sliced and, second, that the \virg{slicing operation} may have a nontrivial kernel. The first phenomenon is typical of controvariant quantities (like vector fields), while the second concerns covariant quantities (to which Section  \ref{subsubCovHorQuant} is dedicated). Hence, we shall call {\it sliceable} those quantities $\Theta$ on the cylinder for which it makes sense to consider the restriction $\Theta|_{M\times\{\g\}}$ to the image of $\iota_\g$ and such that the correspondence
\begin{equation}\label{eqCorrPrincip}
\textrm{sliceable quantity }\Theta\textrm{ on }M\times\G \longleftrightarrow\textrm{family }\{\Theta|_{M\times\{\g\}}\}\textrm{ on }M
\end{equation}
becomes one--to--one. Unlike functions and maps, which are all sliceable, sliceable vector fields  will be understood as the sub--functor of \textit{vertical} derivations (Section  \ref{subsecVerticalVectorFields}) and, dually, sliceable differential forms will be their annihilator and, as such, called \textit{horizontal} (Sections  \ref{seubsubFormeDiffOrizzontaliOrdineUno} and \ref{secFamDiffForm}).  This confirms  the key role played by the  geometry of cylinders  in the conceptual study of families of quantities. \par 
Finally, since the  correspondence \REF{eqCorrPrincip}   suggests that   \virg{anything to which $\iota_\g$ can be applied} can be considered as a family,  then $\iota_\g$ may be also understood as the bundle pull--back $\iota_\g^\circ$. This leads to the notion of a {\it family along a map} (Section  \ref{secCylSlicFamGeo}), which, among other things, allows to give the conceptual definition of an  {\it infinitesimal homotopy}.
 \section{Cylinders  and families of geometrical quantities}\label{secCylSlicFamGeo}
 In this section we collect basic notations and definitions.\par
A product $M\times \G $ is a \emph{cylinder} over $M$.
  An element  $\g\in \G $ is called a \emph{parameter}. 
Map $\iota_\g:M\longrightarrow M\times \G $, $M\ni p \longmapsto  (p,\g)\in M\times \G $ is   the     \emph{slicing map} associated with $\g$, and $M_\g\df \iota_\g(M)=M\times\{\g\}$ is the \emph{slice} of parameter $\g$.
 Obviously, $\iota_\g$ is a smooth embedding. Its restriction, still denoted by $\iota_\g$, is a diffeomorphism between $M$ and $M_\g$.\par
Let $\Theta$ be a geometrical quantity on $M$ such that it makes sense to consider its restriction $\Theta_\g\df\Theta|_{M_\g}$, for all $\g\in\G$. Then the set  $\{\Theta_\g\}_{\g\in \G }$ is made of geometrical objects of  the same kind as $\Theta$. 
\begin{remark}\label{remStupidoMaChePoiServe}
 It should be stressed that  each element of $\{\Theta_g\}_{g\in \G }$ lives on a different manifold, namely, the slice $M_g$. Nonetheless,     $\iota_g$ allows to  transport   $\Theta_g$ back to $M$. This way, an object $\Theta_g$ on $M$ is obtained. 
 \end{remark}
\begin{definition}\label{defFamObj}
A set $\{\theta_\g\}_{\g\in \G }$ is called a (smooth) \emph{family of geometrical quantities} if there exists a quantity $\Theta$ on $M\times \G $ such that $\theta_\g$ corresponds to $\Theta_\g$ via $\iota_\g$, for any $\g\in \G $. 
\end{definition} 
%
%
%
The advantage of Definition \ref{defFamObj} is that a  family $\{\Theta_\g\}_{\g\in \G }$ is smooth by default, allowing us to skip the   modifier \virg{smooth} in the sequel. 
Depending on the kind of $\Theta$,   Defintion \ref{defFamObj} can be specialized as follows. A  \emph{family of   functions} on $M$, parametrized by $\G $,   is a function $f\in C^\infty(M\times \G )$.  
Similarly, a  \emph{family of maps} from $M$ to $N$, parametrized by $\G $, is a smooth map $F:M\times \G \to N$. \par
%
Definition \ref{defFamObj} roughly says  that a family is obtained by \virg{slicing something which lives on the cylinder} (by means of the $\iota_\g$'s). So,  Definition \ref{defFamObj} can be extended if one  introduces more   general objects \virg{which can be sliced}. Such objects may be  sections of an induced vector bundle $F^\circ(\pi)$, where $F:M\times \G \to N$ is   a smooth map and $\pi$ is a vector bundle on $N$.
\begin{definition}\label{defDefinizioniFamiglieDiSezioniLungoUnaMappa}
An element of the $C^\infty(M\times \G )$--module $\Gamma\left(F^\circ(\pi)\right)$ is called \emph{a  family of sections} of $\pi$ parametrized by $\G $ \emph{along} $F$ (or just a \emph{a  family of sections} of $\pi$ parametrized by $\G $ when   $M=N$ and $F=\pi_M$).
\end{definition}
\begin{remark}\label{remCheInveceServe}
In fact,  {\it any} element   $\sigma\in\Gamma\left(F^\circ(\pi)\right)$ can be {\it sliced} into a family $\{\sigma_\g\}_{\g\in \G }$, where $\sigma_\g\df \iota_\g^\circ(\sigma)$, and assignment  $\sigma\longmapsto\{\sigma_g\}_{g\in \G }$ is one--to--one. Accordingly,  
we  call $\sigma$ {\it sliceable} but, as it turns out, not all geometrical quantities on the cylinder will be  sliceable.
\end{remark}
 It is worth observing that Definition \ref{defFamObj} is not a particular case of Definition \ref{defDefinizioniFamiglieDiSezioniLungoUnaMappa}, since families of maps cannot be seen as elements of a module.
%
%
%
\begin{remark}\label{remFamSezDiFamFibr}
A section $\sigma\in\Gamma\left(\pi_M^\circ(\pi)\right)$  can be  {sliced} into a   family of sections $\{\sigma_\g\}_{\g\in \G }\subseteq\Gamma( \pi)$, since $\pi_M\circ \iota_\g=\id_M$ for all $\g$'s.  In general,   if   $\sigma$ is a section of $F^\circ(\pi)$, then   $\sigma_\g$ is \emph{not}   a section of $\pi$, but a section of $\pi_\g\df F_\g^\circ(\pi)$ instead, i.e., $\sigma_\g\in\Gamma(\pi_\g)$, $\g\in\G$. Informally speaking, $\sigma$ defines a family $\{\sigma_\g\}_{\g\in \G }$   of sections of a family of vector bundles $\{\pi_\g\}_{\g\in \G }$.
\end{remark}
%
It is worth observing that $\Gamma(\pi)\subseteq\Gamma\left(\pi_M^\circ(\pi)\right)$ via the map $\sigma\mapsto 1_{C^\infty(M\times \G )}\otimes\sigma$. The image of this embedding is constituted of sections of $\pi_M^\circ(\pi)$ that do \emph{not} depend on the extra parameter, and, as such, may be referred to as \emph{constant}.

\section{Derivatives of families}\label{secDerFam}
Due to their straightforwardness, all proofs in this sections will be omitted. 
We also assume that the reader knows about vertical derivations,   derivations along a map, related derivations, and   the theory of smooth envelopes (see \cite{Nestruev}  for more details). In the sequel, both $C^\infty(M)$ and  $C^\infty(\G )$ are naturally understood as subalgebras of $C^\infty(M\times \G )$ via the canonical monomorphisms $\pi_M^\ast$ and $\pi_\G ^\ast$, respectively, and $C^\infty(M)\otimes_\R C^\infty(\G )$   as a subalgebra of $C^\infty(M\times \G )$, via the product map $\pi_M^\ast\otimes\pi_\G ^\ast$.
  $C^\infty(M\times \G )$ is  tacitly understood both as a  $C^\infty(M)$-- and as a $C^\infty(\G )$--module.\par%
We show how the peculiar geometry of $M\times \G $ allows to lift     any vector field on $\G $   to a $\pi_M$--vertical vector field. In its turn, such  a lift allows to give an intrinsic definition of {\it derivative} of a family.\par Let $P$ be a $C^\infty(M\times \G )$--module.  
\begin{lemma}\label{lemEstDerivCylind}
Given  a $P$--valued derivation   $X$ (resp., $Y$)   of $C^\infty(M)$ (resp., $C^\infty(\G )$), there exists a   unique  $P$--valued derivation $Z$ of $C^\infty(M\times \G )$, simultaneously extending $X$ and $Y$.       
\end{lemma}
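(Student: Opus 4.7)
The strategy is the standard two-step extension: first construct $Z$ on the dense subalgebra $C^\infty(M)\otimes_\R C^\infty(\G)\subset C^\infty(M\times\G)$, then pass to the smooth envelope.

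On an elementary tensor $\pi_M^\ast(a)\cdot\pi_\G^\ast(b)$ there is only one formula compatible with both the Leibniz rule and the requirement that $Z$ extend $X$ and $Y$, namely
\[
Z\bigl(\pi_M^\ast(a)\cdot\pi_\G^\ast(b)\bigr)\df \pi_\G^\ast(b)\cdot X(a)+\pi_M^\ast(a)\cdot Y(b).
\]
This already yields uniqueness on the algebraic tensor product. For existence, I would verify $\R$--bilinearity to obtain a well--defined map on $C^\infty(M)\otimes_\R C^\infty(\G)$, then check the Leibniz identity on a product of two elementary tensors: the verification reduces to the Leibniz rules satisfied by $X$ and $Y$ separately, together with the fact that $\pi_M^\ast(C^\infty(M))$ and $\pi_\G^\ast(C^\infty(\G))$ commute inside $C^\infty(M\times\G)$, so that the cross terms combine correctly.

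The substantive step is the extension from $C^\infty(M)\otimes_\R C^\infty(\G)$ to its smooth envelope $C^\infty(M)\overline\otimes C^\infty(\G)\cong C^\infty(M\times\G)$. Here I would appeal to the general principle, recalled in \cite{Nestruev}, that a derivation of a geometric smooth algebra with values in a geometric module is uniquely determined by, and extends from, any subalgebra whose smooth envelope is the whole algebra. Uniqueness then follows because any two extensions agree on the image of $\pi_M^\ast\otimes\pi_\G^\ast$, hence everywhere.

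The main obstacle is precisely this last step, since the algebraic tensor product is a \emph{proper} subalgebra of $C^\infty(M\times\G)$---a generic smooth function of $(p,\g)$ is not a finite sum of separable products. The extension therefore cannot be performed by $\R$--linearity alone and \emph{genuinely} requires the smooth structure, i.e.\ the identification $C^\infty(M\times\G)\cong C^\infty(M)\overline\otimes C^\infty(\G)$ provided by the theory of smooth envelopes. Granting this machinery, existence, uniqueness and the derivation property transfer automatically to $C^\infty(M\times\G)$, which is presumably why the author chose to omit the routine verification.
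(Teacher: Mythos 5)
Your proposal is correct and follows exactly the route the paper intends: the preamble of that section embeds $C^\infty(M)\otimes_\R C^\infty(\G)$ into $C^\infty(M\times\G)$ via $\pi_M^\ast\otimes\pi_\G^\ast$ and recalls the theory of smooth envelopes from \cite{Nestruev} precisely so that this lemma (whose proof the paper omits as straightforward) reduces to the forced Leibniz formula $Z(\pi_M^\ast(a)\pi_\G^\ast(b))=\pi_\G^\ast(b)X(a)+\pi_M^\ast(a)Y(b)$ on elementary tensors together with unique extension to the envelope. The only implicit hypothesis worth flagging is that $P$ must be a \emph{geometric} module for the smooth--envelope extension principle to apply, which the paper assumes throughout by working in the category of geometric modules.
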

%
\begin{lemma}
Given vector fields $X\in D(M)$ and $Y\in D(\G )$, a unique vector field $Z\in D(M\times \G )$ exists, such that 
\begin{eqnarray*}
Z\circ \pi_M^\ast&=&\pi_M^\ast\circ X,\\
Z\circ \pi_\G ^\ast&=&\pi_\G ^\ast\circ Y.
\end{eqnarray*}
\end{lemma}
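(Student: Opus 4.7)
The plan is to derive this statement as an immediate specialization of the preceding Lemma \ref{lemEstDerivCylind}, taking the module $P$ to be $C^\infty(M\times\G)$ itself. The algebra $C^\infty(M\times\G)$ is regarded, as stipulated in the paragraph preceding that lemma, as a $C^\infty(M)$--module via $\pi_M^\ast$ and as a $C^\infty(\G)$--module via $\pi_\G^\ast$.

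First I would recast the two given vector fields as $P$--valued derivations with $P=C^\infty(M\times\G)$. Given $X\in D(M)$, the composition $\pi_M^\ast\circ X:C^\infty(M)\to C^\infty(M\times\G)$ is a $C^\infty(M\times\G)$--valued derivation of $C^\infty(M)$: the Leibniz rule for $\pi_M^\ast\circ X$ over $C^\infty(M)$ follows at once from the Leibniz rule for $X$ together with the fact that $\pi_M^\ast$ is an algebra homomorphism. Analogously, $\pi_\G^\ast\circ Y$ is a $C^\infty(M\times\G)$--valued derivation of $C^\infty(\G)$.

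Next, I would invoke Lemma \ref{lemEstDerivCylind} with these two data. It provides a unique $C^\infty(M\times\G)$--valued derivation $Z:C^\infty(M\times\G)\to C^\infty(M\times\G)$ which simultaneously extends $\pi_M^\ast\circ X$ and $\pi_\G^\ast\circ Y$. Because $C^\infty(M\times\G)$ is a smooth algebra, its self--derivations are exactly the vector fields on $M\times\G$, so $Z\in D(M\times\G)$. The statement that $Z$ extends the two auxiliary derivations is, by definition of the extension, the pair of identities $Z\circ\pi_M^\ast=\pi_M^\ast\circ X$ and $Z\circ\pi_\G^\ast=\pi_\G^\ast\circ Y$, and uniqueness of $Z$ is inherited from the uniqueness clause of Lemma \ref{lemEstDerivCylind}.

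The only step requiring care is checking that the two recast derivations fulfill the hypotheses of Lemma \ref{lemEstDerivCylind} with respect to the chosen module structures on $C^\infty(M\times\G)$; but this is essentially a tautology, since the module actions of $C^\infty(M)$ and $C^\infty(\G)$ on $C^\infty(M\times\G)$ are by definition multiplication after application of $\pi_M^\ast$ and $\pi_\G^\ast$, respectively. No computation beyond this bookkeeping is involved, so I do not anticipate any genuine obstacle.
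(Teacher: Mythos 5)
Your proposal is correct and follows exactly the route the paper intends: the paper omits the proof as straightforward, but the surrounding text (and the coordinate expression in Remark \ref{exCoorLiftDueVett}) makes clear that this lemma is precisely the specialization of Lemma \ref{lemEstDerivCylind} to $P=C^\infty(M\times\G)$, with $X$ and $Y$ recast as the $P$--valued derivations $\pi_M^\ast\circ X$ and $\pi_\G^\ast\circ Y$. Your bookkeeping of the module structures and the identification of self--derivations of $C^\infty(M\times\G)$ with $D(M\times\G)$ are both accurate, so nothing is missing.
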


The last two conditions mean that $Z$ is $\pi_M$--related to $X$ and $\pi_\G $--related to $Y$. Observe that if $X=0$ (resp., $Y=0$) then $Z$ is $\pi_M$--vertical (resp., $\pi_\G $--vertical). In other words, it holds the following Corollary.

\begin{corollary}\label{corCanExtVectFieldCylind}
Any vector field $X\in D(M)$ (resp., $Y\in D(\G )$) can be lifted to an unique $\pi_\G $--vertical vector field $\widetilde X$ (resp., $\pi_M$--vertical vector field $\widetilde Y$) of $M\times \G $. 
\end{corollary}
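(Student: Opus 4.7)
The plan is to deduce this corollary directly from the preceding lemma by specialising one of the two given vector fields to zero. Indeed, the phrase \emph{lift of} $X$ intrinsically means a vector field $\widetilde{X}\in D(M\times\G)$ which is $\pi_M$--related to $X$, i.e.\ satisfies $\widetilde{X}\circ\pi_M^\ast=\pi_M^\ast\circ X$, and the condition of being $\pi_\G$--vertical is just $\widetilde{X}\circ\pi_\G^\ast=0$. Together, these two requirements are precisely the hypotheses of the preceding lemma, applied to the pair $(X,0)\in D(M)\times D(\G)$.

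More concretely, I would first invoke the preceding lemma with $Y=0$: it yields a unique $Z\in D(M\times\G)$ such that $Z\circ\pi_M^\ast=\pi_M^\ast\circ X$ and $Z\circ\pi_\G^\ast=\pi_\G^\ast\circ 0=0$. The second identity says that $Z$ kills every function pulled back from $\G$, which is the defining property of $\pi_\G$--verticality, so setting $\widetilde{X}\df Z$ produces a $\pi_\G$--vertical lift of $X$. Uniqueness is inherited from the lemma: any other $\pi_\G$--vertical lift $Z'$ would automatically satisfy both normalising conditions with $Y=0$, forcing $Z'=Z=\widetilde{X}$. The parenthetical assertion concerning $Y\in D(\G)$ is obtained symmetrically by applying the lemma to the pair $(0,Y)$.

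Since the whole argument reduces to reading off a special case of the preceding lemma, I do not anticipate any real obstacle; the only point that deserves a brief verbal comment is the tautological identification between \emph{being a lift of} $X$ and the algebraic condition $\widetilde{X}\circ\pi_M^\ast=\pi_M^\ast\circ X$ (respectively between \emph{being $\pi_\G$--vertical} and $\widetilde{X}\circ\pi_\G^\ast=0$), so that the reader sees how the corollary is just a repackaging of the lemma with one input switched off.
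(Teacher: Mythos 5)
Your proposal is correct and is essentially the paper's own argument: the text preceding the corollary deduces it in exactly this way, by specialising the lemma to the pair $(X,0)$ (resp.\ $(0,Y)$) and observing that the vanishing of $Z\circ\pi_\G^\ast$ (resp.\ $Z\circ\pi_M^\ast$) is precisely $\pi_\G$-- (resp.\ $\pi_M$--)verticality. Your explicit remark on uniqueness is a slight elaboration of what the paper leaves implicit, but the route is the same.
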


Vector field $\widetilde X$ (resp., $\widetilde Y$) above is   the \emph{canonical lifting} of $X$ (resp., $Y$).

\begin{remark}[Coordinates]\label{exCoorLiftDueVett}
Let $\{x^1,\ldots,x^n\}$ be local coordinates on $M$ and let $\{y^1,\ldots,y^m\}$ be local coordinates on $\G $. Then the lifting $Z$ of $X=X^i\frac{\partial}{\partial x^i}$ and $Y=Y^j\frac{\partial}{\partial y^j}$ (see Lemma \ref{lemEstDerivCylind}) is given by $Z=\pi_M^\ast(X^i)\frac{\partial}{\partial x^i}+\pi_\G ^\ast(Y^j)\frac{\partial}{\partial y^j}$.
\end{remark}

Since  $\Gamma\left(\pi_M^\circ(\pi)\right)=C^\infty(M\times \G )\otimes_{C^\infty(M)}\Gamma(\pi)$, we also have the next Proposition.
\begin{proposition}\label{propTrivConn1}
  $\nabla_X\df\widetilde X\otimes\id$   is a well--defined first--order differential operator on $\Gamma\left(\pi_M^\circ(\pi)\right)$.
\end{proposition}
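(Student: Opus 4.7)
The plan is to confirm two properties: well-definedness of the formula $\nabla_X(a \otimes \sigma) \df \widetilde{X}(a) \otimes \sigma$ on the tensor product $\Gamma(\pi_M^\circ(\pi)) = C^\infty(M \times \G) \otimes_{C^\infty(M)} \Gamma(\pi)$, and then the first-order differential operator condition in the Vinogradov sense.

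First I would extend the assignment $(a, \sigma) \mapsto \widetilde{X}(a) \otimes \sigma$ $\R$-bilinearly from $C^\infty(M \times \G) \times \Gamma(\pi)$ and check that it respects the $C^\infty(M)$-balanced relation $a \cdot \pi_M^\ast(f) \otimes \sigma = a \otimes f\sigma$. Applying the Leibniz rule to $\widetilde{X}$,
\[
\widetilde{X}(a \cdot \pi_M^\ast(f)) = \widetilde{X}(a) \cdot \pi_M^\ast(f) + a \cdot \widetilde{X}(\pi_M^\ast(f)),
\]
and invoking the $\pi_M$-relatedness identity $\widetilde{X}(\pi_M^\ast(f)) = \pi_M^\ast(X(f))$ supplied by Lemma~\ref{lemEstDerivCylind}, one compares the two representations in $\Gamma(\pi_M^\circ(\pi))$ by means of the tensor identification $\pi_M^\ast(g) \otimes \tau = 1 \otimes g\tau$. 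This is the algebraic heart of the well-definedness claim and the step where the specific geometric content of the canonical lift $X \mapsto \widetilde{X}$ is consumed.

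Once $\nabla_X$ is established as an $\R$-linear endomorphism, the first-order character follows from a short commutator computation. For any $b \in C^\infty(M \times \G)$ and any pure tensor $a \otimes \sigma$,
\[
[\nabla_X, b](a \otimes \sigma) = \widetilde{X}(ba) \otimes \sigma - b\widetilde{X}(a) \otimes \sigma = \widetilde{X}(b) \cdot (a \otimes \sigma),
\]
again by Leibniz applied to $\widetilde{X}$. Thus $[\nabla_X, b]$ is scalar multiplication by $\widetilde{X}(b) \in C^\infty(M \times \G)$, a zeroth-order operator, and a second commutator $[[\nabla_X, b], b']$ vanishes identically for all $b, b' \in C^\infty(M \times \G)$, which is exactly the Vinogradov first-order condition.

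The principal obstacle is the well-definedness step: verifying the balanced property cleanly requires threading carefully between the Leibniz rule for $\widetilde{X}$, the embedding $\pi_M^\ast$, and the identifications that define $\otimes_{C^\infty(M)}$. Once the operator is in hand the first-order verification is purely mechanical, so essentially all the work is packed into step one.
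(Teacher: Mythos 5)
Your overall strategy (extend bilinearly, check the $C^\infty(M)$--balanced relation, then verify the double--commutator condition) is the right one, and the second half is correct: $[\nabla_X,b]$ is multiplication by $\widetilde X(b)$, so $\nabla_X$ is first--order and, indeed, a der--operator over $\widetilde X$. The gap is precisely in the step you yourself single out as the heart of the matter. In this Proposition $X\in D(\G)$, and by Corollary \ref{corCanExtVectFieldCylind} its canonical lift $\widetilde X$ is the $\pi_M$--\emph{vertical} field on $M\times\G$, i.e.
\begin{equation*}
\widetilde X\circ\pi_M^\ast=0 .
\end{equation*}
The identity you invoke, $\widetilde X(\pi_M^\ast(f))=\pi_M^\ast(X(f))$, is the relatedness property of the lift of a vector field on the \emph{other} factor $M$; for $f\in C^\infty(M)$ and $X\in D(\G)$ the expression $X(f)$ is not even defined. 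Worse, if such an identity held with nonzero right--hand side, your Leibniz expansion would leave the residual term
\begin{equation*}
a\cdot\pi_M^\ast(X(f))\otimes\sigma=a\otimes X(f)\sigma ,
\end{equation*}
which cancels against nothing in the other representative $\widetilde X(a)\otimes f\sigma$: the balanced relation would fail and $\widetilde X\otimes\id$ would \emph{not} descend to $C^\infty(M\times\G)\otimes_{C^\infty(M)}\Gamma(\pi)$. (This is exactly why a der--operator over the lift of a vector field on $M$ requires the extra datum of a connection on $\pi$, whereas none is needed here.)

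The repair is one line: $\pi_M$--verticality gives $\widetilde X(a\cdot\pi_M^\ast(f))=\widetilde X(a)\cdot\pi_M^\ast(f)$, so $\widetilde X$ is a $C^\infty(M)$--module endomorphism of $C^\infty(M\times\G)$ and $\widetilde X\otimes\id$ is automatically well defined on the tensor product over $C^\infty(M)$. With that substitution for your \virg{relatedness identity}, the rest of your argument closes as written.
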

 
More precisely,  operator $\nabla_X$ from Proposition \ref{propTrivConn1} is  a \emph{der--operator} (see \cite{DeParis}) in $\Gamma\left(\pi_M^\circ(\pi)\right)$ over $\widetilde X$. It   allows   to extend the notion of derivative to smooth families.
\begin{definition}
Given $\sigma\in\Gamma\left(\pi_M^\circ(\pi)\right)$ and $X\in D(\G )$, the smooth family $\nabla_X(\sigma)$ is called the \emph{derivative} of $\sigma$ with respect to $X$.
\end{definition}
%


Let     $F:M\times\G\to N$ be a family of maps, and suppose that   $\g$ is  running along the trajectory of a vector field $X\in D(\G)$. Then   the $F_\g$'s describe a  \virg{trajectory} in some \virg{space of maps}, i.e., what is usually called a    \emph{deformation}.\footnote{When   $\G=[0,1]$, $F$ is an homotopy.}  In the standard approach, one tries  to add some differentiable structure to this \virg{space of maps}, in order to make it   possible to compute the \virg{velocity} of the deformation.  Thanks to Definition \ref{defDefinizioniFamiglieDiSezioniLungoUnaMappa},  the idea of velocity of a deformation can be formalized algebraically, without even thinking about the \virg{space of maps}.\par
More precisely, consider the composition $X(F)\df\widetilde X\circ F^\ast$, which    is a vector field along $F$, i.e., a smooth section of the bundle $F^\circ(\tau_N)$ induced from the tangent bundle of $N$ by $F$.  According to Definition \ref{defDefinizioniFamiglieDiSezioniLungoUnaMappa}, $X(F)$ represents   a smooth family of vector fields   parametrized by $\G $  along $F$. Moreover, as anticipated by Remark \ref{remFamSezDiFamFibr},   the member   of the family $X(F)$ which corresponds to the parameter $\g$ is  the vector field along $F_\g$ given by
\begin{equation}
X(F)_\g=\iota_\g^\ast\circ\widetilde X\circ F^\ast.
\end{equation}
So, Definition \ref{defHomotopiaInfinitesimale} below is the right algebraic counterpart of the  \virg{velocity of deformation}. Indeed, $X(F)_\g$ associate with a point $p\in M$ the velocity $X(F)_\g|_p\in T_{F(p,\g)}N$ of the trajectory $\g\longmapsto F(p,\g)$ in $N$, where $\g$ is running along a trajectory of $X$.
\begin{definition}\label{defHomotopiaInfinitesimale}
The $F$--relative vector field $X(F)$  is   the \emph{derivative} of $F$ with respect to $X\in D(\G )$. 
\end{definition} 

In the case when   $\G \subset\R$ and $X=\frac{\dd}{\dd t}$, the 
derivative $F'\df\frac{\dd}{\dd t}(F)$ is called  the \emph{infinitesimal homotopy} associated with $F$, and 
symbols   $\frac{\dd}{\dd t}(F)_{t_0}$,  $\left.\frac{\dd F}{\dd t}\right|_{t=t_0}$ and $F'_{t_0}$ are  interchangeable.\par
It is worth noticing that Proposition \ref{propTrivConn1} does not hold if one consider, instead of $\Gamma(\pi_M^\circ(\pi))$,    arbitrary families of sections along   $F$, since the canonical lifting $\widetilde X$ needs \emph{not} to be $F$--vertical. So, derivation(s) with respect to parameter(s) is not intrinsically defined in such a case. As explained in Remark \ref{remFamSezDiFamFibr}, the reason   is that sections of a family corresponding to different values of the parameter  cannot be compared.

	\begin{example}[Flow of a vector field]\label{exFlussoDICampo}
Let  $X$ be a complete vector field on $M$, $\G=\R$, and     consider  the \emph{time vector field} on $M\times\R$, i.e., the   $\pi_M$--vertical vector field 
\begin{equation}
 \frac{\partial}{\partial t}\df\widetilde{\frac{\dd}{\dd t}} .
\end{equation}
 Then     there exists a unique   family of maps $A: M\times\R\to M$, such that $A'=A^\ast\circ X$ and $A_0=\id_M$, i.e., 
\begin{equation}\label{eqFlow1}
\frac{\partial}{\partial t}\circ A^\ast=A^\ast\circ X\quad\textrm{and}\quad A\circ\iota_0=\id_M.
\end{equation}
Family $A$ fulfilling \REF{eqFlow1} is called the \emph{flow} generated by $X$.
Each member $A_t$ is a   diffeomorphism of $M$.\par
\end{example}
Observe that the infinitesimal homotopy $A'$ determined by $A$ can be interpreted as a family $\{A'_t\}_{t\in\R}$ of vector fields, where $A_0'=X$, but, in general, $A_t$ is the relative vector field $A_t^*\circ X$.  This indicates  the possibility that   \emph{any} homotopy may be seen as the   \virg{flow} associated with a family of \emph{relative} vector fields $\{X_t\}_{t\in\R}$, and this analogy will lead, in a surprisingly straightforward way, directly to the proof of the Homotopy Formula (Section \ref{secHomFor}).

 \begin{remark}\label{exLiftFlow1}
If $A:M\times \G \to M$ is the flow of the vector field $X\in D(M)$, then 
\begin{equation}
(M\times N)\times \G \ni(x,y,t)\stackrel{\widetilde{A}}{\longrightarrow}(A(x,t),y)\in M\times N
\end{equation}
is the flow of the canonical lift $\widetilde{X}$.
\end{remark}

 \begin{remark}\label{exLiftCanCompImme}
The canonical lift $\widetilde X$ of $X\in D(M)$ (resp., $\widetilde Y$ of $Y\in D(\G )$) is $\iota_\g$--compatible with $X$ (resp., $\iota_p$--compatible with $Y$) for all $\g\in \G $ (resp.,  for all $p\in M$).
Geometrically, the fact that $\widetilde{X}$ is  $\iota_\g$--compatible with $X$, means that, for any $f\in C^\infty(M\times \G )$ and   $p\in M$, we have 
\begin{equation}
\widetilde{X}_{(p,\g)}(f)=X_p(f_\g), \quad \g\in \G ,
\end{equation}
i.e., the action of $\widetilde{X}$ on a family  of functions $f$ coincides with the action of   $X$ on each its member $f_\g$. 
 \end{remark}

 \section{Integration of   families}\label{secIntFam}
 In this Section  we will define the \virg{inverse} of derivative operation, i.e.,  integration, by  extending  a bounded functional $\F\in C^\infty(\G)^\vee$    to families of objects,   much as Corollary \ref{corCanExtVectFieldCylind} allows to define the derivative operation by lifting derivations of $\G$ to the cylinder. To this end,     both  $C^\infty(\G)$, with $\G$   compact, and  $ C_c^\infty(\G) $  are equipped with the norm of the maximum, so that   $\F$ is   \emph{bounded}    if     $|\F(\varphi)|\leq K\| \varphi\|$ for a given $K\geq 0$, and all $\varphi$'s.
\begin{remark}\label{exConvergenza}
If $\F$ is bounded and $\varphi_t$ converges to $\varphi$ point--wisely in $C^\infty(\G)$ as $t\to 0$, then   $\F(\varphi_t)\longrightarrow \F(\varphi)$ in $\R$ as $t\to 0$.\par
\end{remark}\par
Let $h_x\in\mathrm{Spec}_\R(C^\infty(M))$ (see \cite{Nestruev}) be the        evaluation map  at $x\in M$,    and $f\in C^\infty(M\times\G)$.  Then, regarding  $f$   as a family of functions on $\G$ parametrized by $M$, we can turn each its member $f_x=\iota_x^\ast(f)$ into the real number $\F(f_x)$. In other words, family $f$ is turned into the function   $\widetilde{\F}(f):x\longmapsto  {\F}(f_x)$.
\begin{proposition}
Let $\F$ be bounded,  $f\in C^\infty(M\times\G)$ and $\widetilde{\F}(f)(x)\df  {\F}(f_x)$. Then
 $\widetilde\F(f)\in C^\infty(M)$,  
\begin{equation}\label{eqDiagramaEstenzioneFunzionaleLimitato}
\xymatrix{
C^\infty(M\times\G)\ar[r]^{\widetilde\F}\ar[d]^{\iota_x^\ast}&C^\infty(M)\ar[d]^{h_x}\\
C^\infty(\G)\ar[r]^{\F}&\R}
\end{equation}
is a commutative diagram,   and $\widetilde\F$ is $C^\infty(M)$--linear.
\end{proposition}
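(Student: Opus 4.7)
The commutativity of diagram \REF{eqDiagramaEstenzioneFunzionaleLimitato} and the $C^\infty(M)$--linearity of $\widetilde\F$ are essentially tautological and I would dispatch them first. For any $x\in M$, the very definition $\widetilde\F(f)(x)=\F(f_x)=\F(\iota_x^\ast(f))$ reads as $h_x\circ\widetilde\F=\F\circ\iota_x^\ast$, which is the commutativity. Similarly, for $g\in C^\infty(M)$ regarded inside $C^\infty(M\times\G)$ via $\pi_M^\ast$, one has $\iota_x^\ast(\pi_M^\ast(g)\cdot f)=g(x)\cdot f_x$, so pulling the scalar $g(x)$ out of the $\R$--linear $\F$ yields $\widetilde\F(\pi_M^\ast(g)\cdot f)(x)=g(x)\,\widetilde\F(f)(x)$.

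The substantial content is that $\widetilde\F(f)\in C^\infty(M)$. My plan is to show that for every $X\in D(M)$ the directional derivative of $\widetilde\F(f)$ along $X$ exists and satisfies
\begin{equation*}
X(\widetilde\F(f))=\widetilde\F(\widetilde X(f)),
\end{equation*}
where $\widetilde X$ is the $\pi_\G$--vertical lift provided by Corollary \ref{corCanExtVectFieldCylind}. Since $\widetilde X(f)\in C^\infty(M\times\G)$ again, iterating this identity with respect to arbitrary vector fields on $M$ produces continuous derivatives of $\widetilde\F(f)$ of all orders, which is sufficient for smoothness. Continuity, which is also the base of the iteration, follows from the same strategy: if $x_n\to x$, then $f_{x_n}\to f_x$ in $C^\infty(\G)$, and Remark \ref{exConvergenza} gives $\widetilde\F(f)(x_n)\to\widetilde\F(f)(x)$.

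To produce the derivative at $x\in M$, fix $X\in D(M)$ with local flow $\phi^s$ near $x$ and form the difference quotients
\begin{equation*}
\psi_s\df\frac{\iota_{\phi^s(x)}^\ast(f)-\iota_x^\ast(f)}{s}\in C^\infty(\G).
\end{equation*}
For each fixed $\g\in\G$, the one--variable mean value theorem applied to $s\mapsto f(\phi^s(x),\g)$ shows that $\psi_s(\g)\to\widetilde X(f)(x,\g)$ as $s\to 0$, i.e.\ $\psi_s\to\iota_x^\ast(\widetilde X(f))$ pointwise in $\G$. Invoking Remark \ref{exConvergenza} then gives $\F(\psi_s)\to\F(\iota_x^\ast(\widetilde X(f)))=\widetilde\F(\widetilde X(f))(x)$, which is the claimed identity.

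I expect the main obstacle to be the correct application of Remark \ref{exConvergenza}, since boundedness of $\F$ in $\|\cdot\|_\infty$ together with merely pointwise convergence of $\psi_s$ is not in general enough to pass to the limit. This forces an a priori uniform estimate $\|\psi_s\|_\infty\leq K$ for small $s$, which I would extract from the mean value bound $|\psi_s(\g)|\leq\sup_{|s'|\leq|s|}|\widetilde X(f)(\phi^{s'}(x),\g)|$ together with compactness of $\G$ (or of the relevant support in the $C_c^\infty(\G)$ case), so that $\widetilde X(f)$ is uniformly bounded on the pertinent compact subset of $M\times\G$. The same uniform boundedness is what makes the continuity step of the iteration go through.
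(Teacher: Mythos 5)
Your argument is correct and takes essentially the same route as the paper: both reduce smoothness to the identity $X(\widetilde\F(f))=\widetilde\F(\widetilde X(f))$, obtained by differentiating $s\mapsto \F(\iota_{\phi^s(x)}^\ast(f))$ along the flow of $X$ and passing to the limit via Remark \ref{exConvergenza}, and then iterate over compositions of vector fields to conclude. Your additional step of extracting a uniform sup--norm bound on the difference quotients before invoking Remark \ref{exConvergenza} is a worthwhile refinement rather than a deviation, since for a functional that is merely bounded in the norm of the maximum, pointwise convergence without such a bound would not suffice.
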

 \begin{proof} Obviously, if one replaces $C^\infty(M)$ with $\R^M$, \REF{eqDiagramaEstenzioneFunzionaleLimitato}  becomes commutative, and 
 \begin{equation*}
\widetilde\F(hf)=\{\F(h(x)\iota_x^\ast(f))\}_{x\in M}=\{h(x)\F(\iota_x^\ast(f))\}_{x\in M}=h\widetilde\F(f),
\end{equation*}
  for $h\in C^\infty(M)$ and $f\in C^\infty(M\times\G)$,
due to $\R$--linearity of $\F$, so   $C^\infty(M)$--linearity holds too. It remains to be shown that  $\widetilde\F(f)\in C^\infty(M)$.\par To this end, consider the flow $\{A_t\}$ generated by a vector field $X\in D(M)$. Since $(h_x\circ A_t^\ast)(\varphi)=A_t^\ast(\varphi)(x)=\varphi(A_t(x))$,  $\varphi\in \R^M$, then $h_x\circ A_t^\ast=h_{A_t(x)}$. Therefore,
\begin{equation}\label{eqIntSmFam1}
A_t^\ast(\widetilde\F(f))(x)=\left(h_x\circ A_t^\ast\right)(\widetilde\F(f))=\left(h_{A_t(x)}\circ\widetilde\F\right)(f)=\left(\F\circ\iota^\ast_{A_t(x)}\right)(f).
\end{equation}
Let $\widetilde X\in D(M\times\G)$ be the canonical lift  of $X$  and $\{\widetilde A_t\}$  its flow (see Remark \ref{exLiftFlow1}). Then  $\widetilde A_t\circ\iota_x=\iota_{A_t(x)}$  and   the existence of the derivative $\left.\frac{\dd}{\dd t}\right|_{t=0}\widetilde A_t^\ast(f)$ implies  the existence of the derivative 
\begin{equation}\label{eqIntSmFam2}
\left.\frac{\dd}{\dd t}\right|_{t=0}(\F\circ\iota_x^\ast)\left(\widetilde A_t^\ast(f)\right)
\end{equation}
(see Remark \ref{exConvergenza}), which coincides with $(\F\circ\iota_x^\ast)(\widetilde X (f))=\left(\F(\widetilde X (f))\right)(x)$. 
On the other hand, thanks  to \REF{eqIntSmFam1}, 
$
(\F\circ\iota_x^\ast)\left(\widetilde A_t^\ast(f)\right)=(\F\circ\iota_x^\ast\circ\widetilde A_t^\ast)(f)=(\F\circ\iota_{A_t(x)}^\ast)(f)=A_t^\ast(\widetilde\F(f))(x)
$.
Define
\begin{equation}\label{eqIntSmFam4prima}
X(\widetilde\F(f))(x)\df\left.\frac{\dd}{\dd t}\right|_{t=0}A_t^\ast(\widetilde\F(f))(x),\quad  x\in M,
\end{equation}
and observe that  the   derivative in \REF{eqIntSmFam4prima}  is well--defined  because of   \REF{eqIntSmFam2}. So,  \REF{eqIntSmFam4prima} reads
\begin{equation}\label{eqIntSmFam4}
X(\widetilde\F(f))=\widetilde\F(\widetilde X (f)).
\end{equation}
It follows immediately from \REF{eqIntSmFam4} that the action of any differential operator $\Delta=X_1\circ\cdots\circ X_s$, $X_i\in D(M)$, is well--defined on $\widetilde\F(f)$ and
\begin{equation}
\Delta(\widetilde\F(f))=\widetilde\F(\widetilde X_1(\ldots\widetilde X_s(f)\ldots)).
\end{equation}
This proves smoothness of $\widetilde\F(f)$.\footnote{If a subset $A\subseteq\R^M$ is closed under the action of differential operators on $M$,  then $A\subseteq C^\infty(M)$.}
\end{proof}
%
Since $\widetilde\F$ coincides with $\F$ on the subalgebra $C^\infty( \G )$ of $C^\infty(M\times \G )$,  it is appropriate to call it the {\it lift} of $\F$.  
\begin{proposition}\label{propEstenzioneDellIntegraleAFAmiglieDISezioni}
Operator   $\widetilde\F$ extends to a $C^\infty(M)$--homomorphism $\widetilde\F:\Gamma\left(\pi_M^\circ(\pi)\right)\longrightarrow\Gamma(\pi)$, denoted by the same symbol.
\end{proposition}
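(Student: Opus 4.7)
The plan is to use the standard identification
\[
\Gamma\left(\pi_M^\circ(\pi)\right) \;=\; C^\infty(M\times\G) \otimes_{C^\infty(M)} \Gamma(\pi)
\]
(already invoked in Proposition \ref{propTrivConn1}) and to define the extension as the tensor product of the scalar extension with the identity, namely $\widetilde\F := \widetilde\F \otimes \id_{\Gamma(\pi)}$. Concretely, on a decomposable section I would set
\[
\widetilde\F(f \otimes s) \;\df\; \widetilde\F(f)\cdot s, \qquad f\in C^\infty(M\times\G),\ s\in\Gamma(\pi),
\]
and extend by $\R$-linearity. Note that the right-hand side lies in $\Gamma(\pi)$ because $\widetilde\F(f)\in C^\infty(M)$ by the preceding proposition and $\Gamma(\pi)$ is a $C^\infty(M)$-module; so the smoothness issue, which was the genuine work of the previous proof, is automatically inherited.

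The first thing to check is that the assignment is well-defined on the balanced tensor product over $C^\infty(M)$: the relation $(hf)\otimes s = f\otimes(hs)$ for $h\in C^\infty(M)$ must be preserved, which amounts to $\widetilde\F(hf)\cdot s = \widetilde\F(f)\cdot(hs)$. This is exactly the content of the $C^\infty(M)$-linearity of the scalar $\widetilde\F$, already established above. Next, I would verify $C^\infty(M)$-linearity of the bundle-valued extension: for $h\in C^\infty(M)$,
\[
\widetilde\F\bigl(h\cdot(f\otimes s)\bigr) \;=\; \widetilde\F\bigl((hf)\otimes s\bigr) \;=\; \widetilde\F(hf)\,s \;=\; h\,\widetilde\F(f)\,s \;=\; h\,\widetilde\F(f\otimes s),
\]
and this extends to arbitrary sections by linearity. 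Finally, consistency with the previous proposition is seen by taking $\pi$ to be the trivial line bundle, in which case $\Gamma(\pi)=C^\infty(M)$ and the new $\widetilde\F$ reduces to the old one, justifying the reuse of the symbol.

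There is no real obstacle here: the proof is entirely formal once one works in the tensor product description of $\Gamma(\pi_M^\circ(\pi))$. The only point that needs the preceding proposition in an essential way is the $C^\infty(M)$-linearity of the scalar $\widetilde\F$, which is precisely what allows the base-change $\widetilde\F\otimes\id_{\Gamma(\pi)}$ to make sense as a map of $C^\infty(M)$-modules. All analytic content (smoothness) was already settled in the scalar case.
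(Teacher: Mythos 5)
Your proof is correct and follows exactly the paper's route: the paper also defines the extension as $\widetilde\F\otimes\id$ on $C^\infty(M\times\G)\otimes_{C^\infty(M)}\Gamma(\pi)$ and appeals to the $C^\infty(M)$--linearity of the scalar $\widetilde\F$ for well--definedness. You merely spell out the verification on balanced tensors that the paper leaves implicit.
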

\begin{proof} Due to $C^\infty(M)$--linearity of $\F$, the the product $ \widetilde\F\otimes\id$ is a well--defined $C^\infty(M)$--homomorphism from $C^\infty(M\times \G )\otimes_{C^\infty(M)}\Gamma(\pi)$ to $\Gamma(\pi)$.\end{proof}
If  $\G$ is compact and $\F=\int_\G$, then      the section 
\begin{equation*}
\widetilde{\int_\G}\sigma\in\Gamma(\pi),  
\end{equation*}
  is called the \emph{integral}  (over   $\G$) of the family of sections  $\sigma\in\Gamma\left(\pi_M^\circ(\pi)\right)$.
Corollary \ref{corIntSmFam} below  is a straightforward generalization   of an elementary property of differentiation, namely that it commutes with integration w.r.t.  other  parameter(s). 
\begin{corollary}\label{corIntSmFam}
Let $X:  C^\infty(M) \longrightarrow \Gamma(\pi)$ be a derivation and $\widetilde X$   its canonical lift. Then the diagram
\begin{equation}
\xymatrix{%
C^\infty(M)\ar[r]^{X}&\Gamma(\pi)\\
C^\infty(M\times \G )\ar[u]^{\widetilde\F}\ar[r]^{\widetilde X}&\Gamma\left(\pi_M^\circ(\pi)\right),\ar[u]^{\widetilde\F}%
}
\end{equation}
is commutative.
\end{corollary}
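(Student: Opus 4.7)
The plan is to bootstrap from the scalar case already proved inside the preceding proposition, namely the identity $\widetilde{\F}(\widetilde{Z}(f)) = Z(\widetilde{\F}(f))$ for every $Z \in D(M)$ and $f\in C^\infty(M\times\G)$, which appeared as equation \REF{eqIntSmFam4}. The idea is to decompose a $\Gamma(\pi)$-valued derivation $X$ along a local frame of $\pi$ into ordinary scalar vector fields $X_\alpha$, apply the scalar identity to each $X_\alpha$, and reassemble the result using $C^\infty(M)$-linearity of $\widetilde{\F}$.

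First I would make the \emph{canonical lift} explicit: given a derivation $X: C^\infty(M)\to\Gamma(\pi)$, compose with the constant inclusion $\Gamma(\pi)\hookrightarrow\Gamma(\pi_M^\circ(\pi))$, $s\mapsto 1\otimes s$, and apply Lemma \ref{lemEstDerivCylind} with the zero derivation on $C^\infty(\G)$. This yields a unique $\pi_\G$-vertical extension $\widetilde{X}: C^\infty(M\times\G)\to\Gamma(\pi_M^\circ(\pi))$, playing here the role that Corollary \ref{corCanExtVectFieldCylind} plays for ordinary vector fields.

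Then I would verify the identity locally. On an open $U\subset M$ over which $\pi$ admits a frame $\{e_\alpha\}$, write $X|_U = \sum_\alpha X_\alpha\otimes e_\alpha$ with $X_\alpha\in D(U)$; by uniqueness in Lemma \ref{lemEstDerivCylind} the lift decomposes as $\widetilde{X}|_{U\times\G} = \sum_\alpha \widetilde{X_\alpha}\otimes e_\alpha$, with each $\widetilde{X_\alpha}$ the canonical scalar lift of Corollary \ref{corCanExtVectFieldCylind}. Using $C^\infty(M)$-linearity of $\widetilde{\F}$ on sections (Proposition \ref{propEstenzioneDellIntegraleAFAmiglieDISezioni}) together with the scalar identity applied to each $X_\alpha$, one then gets
\[
\widetilde{\F}(\widetilde{X}(f)) = \sum_\alpha \widetilde{\F}(\widetilde{X_\alpha}(f))\, e_\alpha = \sum_\alpha X_\alpha(\widetilde{\F}(f))\, e_\alpha = X(\widetilde{\F}(f))
\]
for every $f\in C^\infty(M\times\G)$ supported over $U$.

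To globalize, the cleanest route is to observe that $\widetilde{\F}\circ\widetilde{X} - X\circ\widetilde{\F}$ is an $\R$-linear map $C^\infty(M\times\G)\to\Gamma(\pi)$ that vanishes by the local computation; alternatively, a partition-of-unity argument combined with $C^\infty(M)$-linearity of $\widetilde{\F}$ does the job. The main obstacle is not analytic but notational: the symbol $\widetilde{\F}$ now stands simultaneously for the lift acting on functions and on sections, and one must verify that these two fit together correctly through the tensor-product presentation $\Gamma(\pi_M^\circ(\pi)) = C^\infty(M\times\G)\otimes_{C^\infty(M)}\Gamma(\pi)$ used in Proposition \ref{propEstenzioneDellIntegraleAFAmiglieDISezioni}. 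Once this bookkeeping is settled, no new input beyond the scalar case is required.
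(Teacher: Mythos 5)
Your proposal is correct and follows essentially the same route as the paper's proof: decompose $X$ locally as $X_i\otimes s^i$ over a frame of $\pi$, lift termwise via uniqueness of the canonical extension, apply the scalar identity \REF{eqIntSmFam4} to each $X_i$, and reassemble using the $C^\infty(M)$-linearity of $\widetilde\F$ from Proposition \ref{propEstenzioneDellIntegraleAFAmiglieDISezioni}. The only difference is that you spell out the globalization step, which the paper leaves implicit.
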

\begin{proof}
Locally, $X=  X_i\otimes s^i$, where $X_i\in D(M)$ and $\{s^i\}\subseteq\Gamma(\pi)$ is a local basis.  Then $\widetilde{X}= \widetilde{X}_i\otimes s^i$ and  $\widetilde{X}(f)= \widetilde{X}_i(f)\otimes s^i$,  for every  $f\in C^\infty(M\times \G )$, so that    $\widetilde\F(\widetilde{X}(f))= \widetilde\F(\widetilde{X}_i(f))\otimes s^i$ (see Proposition \ref{propEstenzioneDellIntegraleAFAmiglieDISezioni}). But,  in view of \REF{eqIntSmFam4}, $ \widetilde\F(\widetilde{X}_i(f))\otimes s^i$  coincides with $  X_i(\widetilde\F(f))\otimes s^i$, i.e.,   $X(\widetilde\F(f))$.
\end{proof}

Proposition \ref{propFormulaNL} below provides a sort of Newton--Leibniz formula depending on parameters running over $M$.  
\begin{proposition}\label{propFormulaNL}
If $F:M\times [a,b]\longrightarrow N$, then 
\begin{equation}\label{BellaFormula-1}
\widetilde{\int_a^b}\circ F'=F_b^\ast-F_a^\ast.
\end{equation}
\end{proposition}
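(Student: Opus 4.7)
The plan is to test the asserted identity of operators $C^\infty(N)\to C^\infty(M)$ on an arbitrary function $g\in C^\infty(N)$ and reduce the statement to the classical Newton--Leibniz formula in one real variable, applied slice by slice in $M$. First I would set $f\df F^\ast(g)\in C^\infty(M\times[a,b])$. Unwinding the definition of the infinitesimal homotopy (Definition \ref{defHomotopiaInfinitesimale}), $F'(g)=\widetilde{\frac{\dd}{\dd t}}(F^\ast(g))=\widetilde{\frac{\dd}{\dd t}}(f)$, while $F_b^\ast(g)-F_a^\ast(g)=\iota_b^\ast(f)-\iota_a^\ast(f)$ since $F_t=F\circ\iota_t$. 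Thus the whole proposition collapses to the assertion
\begin{equation*}
\widetilde{\int_a^b}\left(\widetilde{\tfrac{\dd}{\dd t}}(f)\right)=\iota_b^\ast(f)-\iota_a^\ast(f)\quad\text{in }C^\infty(M).
\end{equation*}

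Next I would verify this pointwise on $M$ by applying the evaluation homomorphism $h_x$ at an arbitrary $x\in M$. The commutative diagram \REF{eqDiagramaEstenzioneFunzionaleLimitato} (with the roles of $M$ and $\G$ interchanged, $\G=[a,b]$, $\F=\int_a^b$) rewrites the left-hand side as $\int_a^b\iota_x^\ast(\widetilde{\frac{\dd}{\dd t}}(f))$, where now $\iota_x\colon[a,b]\to M\times[a,b]$ is the slicing map by the point $x\in M$. Here I would invoke Remark \ref{exLiftCanCompImme}: the canonical lift $\widetilde{\frac{\dd}{\dd t}}$ is $\iota_x$--compatible with $\frac{\dd}{\dd t}$, so $\iota_x^\ast\circ\widetilde{\frac{\dd}{\dd t}}=\frac{\dd}{\dd t}\circ\iota_x^\ast$. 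Consequently
\begin{equation*}
h_x\!\left(\widetilde{\int_a^b}\widetilde{\tfrac{\dd}{\dd t}}(f)\right)=\int_a^b\tfrac{\dd}{\dd t}(\iota_x^\ast f)\,\dd t
\end{equation*}
and the ordinary Newton--Leibniz formula in the variable $t$ gives $\iota_x^\ast(f)(b)-\iota_x^\ast(f)(a)=f(x,b)-f(x,a)$, which is precisely $h_x(\iota_b^\ast f-\iota_a^\ast f)$.

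Since both sides of the displayed identity are smooth functions on $M$ that coincide at every point $x$, they coincide as elements of $C^\infty(M)$; specializing to $f=F^\ast(g)$ for an arbitrary $g\in C^\infty(N)$ yields \REF{BellaFormula-1}. I do not expect any genuine obstacle: all of the weight is carried by the pre-established pieces, namely (i) the commutativity of $h_x$ with $\widetilde{\F}$, (ii) the $\iota_x$--compatibility of the canonical lift of a vector field on $\G$, and (iii) the classical Newton--Leibniz formula. The only point that needs a touch of care is bookkeeping the direction of the slicing --- the $\iota_x$ used to evaluate pointwise on $M$ slices $\G$ rather than $M$, which is the opposite of the slicing maps used to recover families from cylinder objects, but Remark \ref{exLiftCanCompImme} supplies exactly the compatibility needed for this dual situation.
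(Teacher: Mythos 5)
Your argument is correct and follows essentially the same route as the paper's own proof: evaluate on $g\in C^\infty(N)$ and at a point $x\in M$, pass $h_x$ through $\widetilde{\int_a^b}$ via the commutative diagram \REF{eqDiagramaEstenzioneFunzionaleLimitato}, use the $\iota_x$--compatibility of the canonical lift from Remark \ref{exLiftCanCompImme}, and finish with the classical Newton--Leibniz formula. Your intermediate observation that the identity already holds for arbitrary $f\in C^\infty(M\times[a,b])$ (not just $f\in\im F^\ast$) is exactly the point the paper makes right after the proof when it records formula \REF{BellaFormula}.
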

\begin{proof}
 By evaluating both sides of \REF{BellaFormula-1} on $f\in C^\infty(N)$ and then on $x\in M$ we obtain
 \begin{equation}\label{BellaFormula-1bis}
\left(h_x\circ\widetilde{\int_a^b}\circ F'\right)(f)=f(F_b(x))-f(F_a(x)).
\end{equation}
 In view of Definition  \ref{defHomotopiaInfinitesimale} and of commutativity of \REF{eqDiagramaEstenzioneFunzionaleLimitato}, the left--hand side of \REF{BellaFormula-1bis} reads 
  \begin{equation}\label{BellaFormula-1tris}
\left({\int_a^b}\circ\iota_x^\ast\circ \frac{\partial}{\partial t}\right)(F^\ast(f)).
\end{equation}
In its turn (see Remark \ref{exLiftCanCompImme}), \REF{BellaFormula-1tris} reads
$
\left({\int_a^b}\circ \frac{\dd}{\dd t}\right)(\iota_x^\ast(F^\ast(f)))
$,
i.e., 
$
(\iota_x^\ast(F^\ast(f)))(b)-(\iota_x^\ast(F^\ast(f)))(a)
$, which is precisely the right--hand side of  \REF{BellaFormula-1bis}.
\end{proof}

An interesting case of \REF{BellaFormula-1} is obtained when   $N=M\times\I$ and $F=\id_{M\times\I}$. Indeed,  $F_t=\iota_t$, and \REF{BellaFormula-1}  becomes
\begin{equation}\label{BellaFormula}
\iota_b^\ast-\iota_a^\ast=\widetilde{\int_a^b}\circ \frac{\partial}{\partial t}.
\end{equation}
Formula \REF{BellaFormula}  is more general than \REF{BellaFormula-1}, in that the  family to be integrated does not need to belong to $\im F^\ast$. Both \REF{BellaFormula-1} and \REF{BellaFormula}  admits a generalization to   families of forms (see Section \ref{secFamDiffForm}), which is essential to prove the homotopy formula (see Section  \ref{secHomFor}).    It should be stressed  that  an analogous  generalization    to sections of a generic vector bundle $\pi$ along a map $F:M\times\I\to N$ is not always possible. Indeed, for a generic $\pi$, the \virg{derivative} $F'$ of $F$ is not defined, unless $\pi$ defines a  \virg{covariant quantity}, as explained in Section \ref{subsubCovHorQuant} below.

 \section{Families of covariant quantities}\label{subsubCovHorQuant} 
Roughly speaking, a {\it covariant quantity} on $M$ is a bundle $\pi_{\Phi,M}$ which is {\it naturally associated} with $M$. Naturality implies, in particular,   the existence of pull--backs and Lie derivatives of sections of  $\pi_{\Phi,M}$. The formers allow to define families of covariant quantities, and the latter to formalize their derivative.\par More precisely,  let  $\FF$  be a representable functor of differential calculus  over $M$, and  $\Phi(M)$ the  $C^\infty(M)$--module which represents it in the category of geometric   $C^\infty(M)$--modules. Observe that a vector bundle $\pi_{\Phi,M}$  exists, such that  $\Phi(M)=\Gamma(\pi_{\Phi,M})$.
\begin{definition}
$\Theta\in\Phi(M)$ is a \emph{covariant quantity} (of type $\Phi$) on $M$.
\end{definition}
Since $M\longmapsto\Phi(M)$ is a functor,   any $F:M\longrightarrow N$ determines the functorial pull--back   $F^\ast:\Phi(N)\longrightarrow \Phi(M)$. On the other hand, being $\Phi(N)$ a module of sections, there is also a bundle--theoretic pull--back $F^\circ:\Gamma( \pi_{\Phi,N})\longrightarrow  \Gamma(F^\circ(\pi_{\Phi,N}))$.
In general, however,   the module $\Gamma(F^\circ(\pi_{\Phi,N}))$ has  nothing in common with $\Gamma(\pi_{\Phi,M})=\Phi(M)$, being  neither a submodule of $\Gamma(\pi_{\Phi,M})$ nor an its quotient.

\begin{definition}\label{defDefinitioneElementiEffeORizzontali}
Elements of  the submodule   $\Phi_F(M)\subseteq\Phi(M)$ generated by $F^\ast(\Phi(N))$   are called \emph{$F$--horizontal}. 
\end{definition}
In other words,   $\Phi_F(M)$ is made of elements of $\Phi(M)$ which are  of the form $  f_i F^\ast(\omega^i)$, with $f_i\in C^\infty(M)$, $\omega^i\in\Phi(N)$. In particular,   $\Phi_{\pi_M}(M\times \G )$ is precisely the module of families of sections of $\pi_{\Phi, M}$ according to   Definition \ref{defDefinizioniFamiglieDiSezioniLungoUnaMappa}. Indeed, when $F=\pi_M$,  one has   $F^\circ=F^\ast$ and $\Gamma(\pi_M^\circ(\pi_{\Phi,M}))=\Phi_{\pi_M}(M\times \G)$. So, for   covariant quantities, the    vague idea of being sliceable  is properly formalized in terms of      $\pi_M$--horizontality.
\begin{definition}\label{defDefinizioneFamiglieOggettiCOvarianti}
Elements of $\Phi_{\pi_M}(M\times \G )$ are   \emph{families of $\Phi$--type quantities} on $M$.
\end{definition}
\begin{example}
The module $\Lambda^k(M)$ represents the functor $D_k:P\to D_k(P)$ (see \cite{Nestruev}).  By definition, $\FF=D_0$ is the identity functor and $\Phi(M)=\Lambda^0(M)=C^\infty(M)$. The   Newton--Leibniz formula  \REF{BellaFormula-1} for families of maps  corresponds, therefore, to $\Phi=\Lambda^0$, and it will be generalized   to $\Lambda^k$, $k>0$, in Section \ref{secHomFor}.\par
\end{example}
%
%
Definition \ref{defDefinizioneFamiglieOggettiCOvarianti} allows to differentiate families  of $\Phi$--type quantities on $M$ by means of Lie derivatives. More precisely, given a vector field  $X\in D(M)$ and its flow $A$     (see Example \ref{exFlussoDICampo}),  the Lie derivative $L_X^\Phi:\Phi(M)\longrightarrow\Phi(M)$ is the derivative 
\begin{equation}\label{eqDerLieCovQuan}
L_X^\Phi\df\left.\frac{\dd A_{t,\Phi}^\ast}{\dd t}\right|_{t=0}
\end{equation}
of the family of functorial pull--backs $A_{t,\Phi}^\ast: \Phi(N)\longrightarrow\Phi(N)$. 
When $\Phi=\Lambda$, the Cartan formula allows to define  $L_X^\Phi$ in a pure algebraic way,\footnote{The algebraic definition of the Lie derivative for arbitrary $\Phi$'s is a more delicate problem, which  do not touch here.} namely
\begin{equation*}
L_X^{\Lambda}\df i_X\circ d + d\circ i_X.
\end{equation*}

So,  the Lie derivative   $L_{\widetilde{X}}^\Phi$ act as well on the submodule $\Phi_{\pi_M}(M\times \G )\subseteq \Phi(M)$ and on the subspace $\pi_M^\ast((\Phi(M))\subseteq \Phi_{\pi_M}(M\times \G )$.  Corollary \ref{corNablaXLX} below shows that $L_{\widetilde X}^\Phi$ actually vanishes on $\pi_M^\ast((\Phi(M))$, while on $\Phi_{\pi_M}(M\times \G )\subseteq \Phi(M)$ it coincides with the derivative operator $\nabla_X$ defined in Proposition \ref{propTrivConn1}.
%
\begin{proposition}\label{propVertLX}
$L_{\widetilde{X}}^\Phi$  preserves $\Gamma(\pi_M^\circ(\xi))$ and is $\pi_M$--vertical.
\end{proposition}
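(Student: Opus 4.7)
The plan is to trace both claims back to a single geometric observation: by Corollary \ref{corCanExtVectFieldCylind}, the canonical lift $\widetilde X$ is $\pi_M$--vertical as a derivation, hence its (possibly local) flow $\{\widetilde A_t\}$ preserves the fibres of $\pi_M$, i.e.\ $\pi_M\circ\widetilde A_t=\pi_M$. Functoriality of $\Phi$ then promotes this to the identity $\widetilde A_{t,\Phi}^\ast\circ\pi_M^\ast=\pi_M^\ast$ on $\Phi(M)$, so the one--parameter family of pull--backs $t\longmapsto\widetilde A_{t,\Phi}^\ast$ is literally constant on the subspace $\pi_M^\ast(\Phi(M))\subseteq\Phi(M\times\G)$.

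Differentiating this constancy at $t=0$ via \REF{eqDerLieCovQuan} immediately gives $L_{\widetilde X}^\Phi(\pi_M^\ast\omega)=0$ for every $\omega\in\Phi(M)$, which is precisely the assertion of $\pi_M$--verticality. For the preservation statement, I would invoke Definition \ref{defDefinitioneElementiEffeORizzontali}: a generic element of $\Gamma(\pi_M^\circ(\pi_{\Phi,M}))=\Phi_{\pi_M}(M\times\G)$ has the form $\sum_i f_i\,\pi_M^\ast(\omega_i)$ with $f_i\in C^\infty(M\times\G)$ and $\omega_i\in\Phi(M)$. Since any Lie derivative obeys the Leibniz rule over its underlying vector field, one has $L_{\widetilde X}^\Phi(f_i\,\pi_M^\ast(\omega_i))=\widetilde X(f_i)\,\pi_M^\ast(\omega_i)+f_i\,L_{\widetilde X}^\Phi(\pi_M^\ast(\omega_i))$; the second summand collapses by the verticality just proved, and the first manifestly lies in $\Phi_{\pi_M}(M\times\G)$. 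Summing over $i$ establishes stability.

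The main obstacle, in my view, is the one the author themself flags: for an abstract representable $\Phi$, the flow--based definition \REF{eqDerLieCovQuan} presupposes both a functorial action of $\widetilde A_t$ on $\Phi(M\times\G)$ and the existence of the $t$--derivative in a suitable topology. On $\pi_M^\ast(\Phi(M))$ this difficulty is analytically free (the family is constant), so verticality is painless; the subtlety is absorbed into the Leibniz rule, which must be justified case by case. For $\Phi=\Lambda$ one has an entirely algebraic alternative via Cartan's formula $L_{\widetilde X}^\Lambda=i_{\widetilde X}\circ d+d\circ i_{\widetilde X}$: since $\widetilde X$ is $\pi_M$--vertical, $i_{\widetilde X}$ already annihilates $\pi_M^\ast(\Lambda(M))$, and both halves of the proposition follow directly without recourse to a flow.
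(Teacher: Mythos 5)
Your argument is correct and coincides with the paper's own proof: both derive $\pi_M$--verticality by observing that $\pi_M\circ\widetilde A_t=\pi_M$ makes the family $\widetilde A_{t,\Phi}^\ast\circ\pi_M^\ast$ constant, hence of vanishing $t$--derivative, and both obtain preservation of $\Gamma(\pi_M^\circ(\xi))$ by applying the Leibniz rule to $f_i\,\pi_M^\ast(\omega_i)$ and killing the second summand by the verticality just established. Your closing remarks on the analytic subtleties of \REF{eqDerLieCovQuan} for abstract $\Phi$ and on the Cartan--formula shortcut for $\Phi=\Lambda$ are sensible but supplementary; the core proof is the same.
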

\begin{proof}
Let $\widetilde{A}_t$ be the flow generated by $\widetilde{X}$. Then   $\pi_M\circ\widetilde{A}_t=\pi_M$ (see Corollary \ref{corCanExtVectFieldCylind})  and,  in view of \REF{eqDerLieCovQuan},
\begin{equation*}
L_{\widetilde{X}}^\Phi\circ\pi_M^\ast=\left.\frac{\dd {\widetilde{A}}_{t,\Phi}^\ast}{\dd t}\right|_{t=0}\circ\pi_M^\ast=\left.\frac{\dd {\widetilde{A}}_{t,\Phi}^\ast\circ\pi_M^\ast}{\dd t}\right|_{t=0}=\left.\frac{\pi_M^\ast}{\dd t}\right|_{t=0}=0.
\end{equation*}
So, $L_{\widetilde{X}}^\Phi$ is $\pi_M$--vertical. Let now $\rho=f_i\pi_M^\ast(\Theta^i)\in\Gamma(\pi_M^\circ(\xi))$, with   $f_i\in C^\infty(M\times \G )$ and $\Theta^i\in\Phi(M)$.   Then $
L_{\widetilde{X}}^\Phi(\rho)= X(f_i)\pi_M^\ast(\Theta^i)\in\Gamma(\pi_M^\circ(\xi))
$ follows from    Leibniz rule and $\pi_M$--verticality of $L_{\widetilde{X}}^\Phi$.  
\end{proof}
Proposition \ref{propVertLX} allows an immediate proof of Corollary \ref{corNablaXLX} below.
\begin{corollary}\label{corNablaXLX}\ \par
\begin{enumerate}
\item $L_{\widetilde X}^\Phi$ restricted to $\Gamma(\pi_M^\circ(\xi))$ coincides with $\nabla_X$,
\item  $\pi_M^\ast((\Phi(M))$ consists of constant families of $\Phi$--type covariant quantities.

\end{enumerate}
\end{corollary}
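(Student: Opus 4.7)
My plan is to deduce both assertions directly from Proposition \ref{propVertLX}, together with the fact that $\Gamma(\pi_M^\circ(\xi)) = \Phi_{\pi_M}(M\times\G)$ is generated, as a $C^\infty(M\times\G)$-module, by $\pi_M^\ast(\Phi(M))$. The key observation is that both the restriction $L_{\widetilde X}^\Phi|_{\Gamma(\pi_M^\circ(\xi))}$ and $\nabla_X$ are der-operators over the \emph{same} derivation $\widetilde X$ of $C^\infty(M\times\G)$: the former thanks to the first half of Proposition \ref{propVertLX} (preservation of the submodule) combined with the standard Leibniz property of the Lie derivative on $\Phi$, the latter by construction $\nabla_X = \widetilde X\otimes\id$ from Proposition \ref{propTrivConn1}. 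Since two der-operators over a common derivation are determined by their action on any $C^\infty(M\times\G)$-module generating set, it suffices to check agreement on the distinguished generators $\pi_M^\ast(\Theta)$, $\Theta\in\Phi(M)$.

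For assertion (1), on each such generator one has on the one hand $\nabla_X(\pi_M^\ast(\Theta)) = \nabla_X(1\otimes\Theta) = \widetilde X(1)\otimes\Theta = 0$, and on the other hand $L_{\widetilde X}^\Phi(\pi_M^\ast(\Theta)) = 0$ by the $\pi_M$-verticality established in Proposition \ref{propVertLX}. Both operators therefore vanish on the generators, whence by the Leibniz rule they agree on a general element $f_i\pi_M^\ast(\Theta^i)$, where both return $\widetilde X(f_i)\pi_M^\ast(\Theta^i)$; this is exactly the equality claimed in (1). Assertion (2) then follows at once: interpreting a \emph{constant family} intrinsically as a section of $\Gamma(\pi_M^\circ(\xi))$ annihilated by every $\nabla_X$, $X\in D(\G)$---consistently with the earlier reading of $\pi_M^\ast(\Phi(M))$ as families that do not depend on the parameter---the identity $\nabla_X(\pi_M^\ast(\Theta)) = 0$ already established, valid for all $X\in D(\G)$, is precisely the constancy statement.

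I do not anticipate a substantial obstacle. The only subtlety is ensuring that $L_{\widetilde X}^\Phi$ really is a der-operator over $\widetilde X$ when restricted to $\Gamma(\pi_M^\circ(\xi))$; this is handled by Proposition \ref{propVertLX}, whose proof already exhibits the explicit action $L_{\widetilde X}^\Phi(f_i\pi_M^\ast(\Theta^i)) = \widetilde X(f_i)\pi_M^\ast(\Theta^i)$ (reading the $X(f_i)$ appearing there as $\widetilde X(f_i)$, since $f_i\in C^\infty(M\times\G)$). Once that is granted, everything reduces to the one-line computation on generators above.
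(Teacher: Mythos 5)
Your proof is correct and follows essentially the same route as the paper, which declares the corollary an ``immediate'' consequence of Proposition \ref{propVertLX}: the $\pi_M$--verticality of $L_{\widetilde X}^\Phi$ gives its vanishing on the generators $\pi_M^\ast(\Phi(M))$, and the Leibniz rule then yields $L_{\widetilde X}^\Phi(f_i\pi_M^\ast(\Theta^i))=\widetilde X(f_i)\pi_M^\ast(\Theta^i)=\nabla_X(f_i\pi_M^\ast(\Theta^i))$, with part (2) falling out of the same computation. Your der--operator framing and the remark that the $X(f_i)$ in the paper's Proposition \ref{propVertLX} should be read as $\widetilde X(f_i)$ are both accurate, but they do not change the substance of the argument.
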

%
%
In the case of  $\Phi=\Lambda$,  Definition \ref{defDefinizioneFamiglieOggettiCOvarianti}  corresponds to  the well--known  notion of horizontal forms, i.e.,      families of forms according to Definition \ref{defDefinitioneElementiEffeORizzontali}. Details will be discussed in Section \ref{secFamDiffForm}. 
Section \ref{subsecVerticalVectorFields} below  focuses on the dual side, i.e.,  families of vector fields. 
 \section{Example of families of \virg{controvariant quantities}: vector fields and derivations}\label{subsecVerticalVectorFields}
Following paradigm \REF{equazioneBASEconc},  a family $\{Z_g\}$ of vector fields should correspond to an  element   $Z\in D(M\times \G )$.  But  $Z$ is a controvariant quantity, so the pull--back $\iota_\g^\ast$ cannot be applied to it. On the other hand, $Z$ may be interpreted as a section of the tangent bundle  (see Remark \ref{remCheInveceServe}), and as such  the bundle--theoretic pull--back can be applied to it. However, the so--obtained vector field  $\iota_\g^\circ (Z)$ is   a {\it relative} one.\par
In this Section we implement correspondence \REF{eqCorrPrincip} in the case of vector fields. The first step  is to put 
\begin{equation}\label{eqCampoAffettato}
Z_\g\df\iota_\g^*\circ Z\circ\pi_M^*.
\end{equation}
The reason of choice \REF{eqCampoAffettato} is that $Z_\g$ is precisely the restriction $Z|_{M\times\{\g\}}$ via identification $\iota_\g$ (see Remark \ref{remStupidoMaChePoiServe}). 
The second task is to determine which submodule should be replaced to $D(M\times \G )$ in order to make 
\begin{equation}\label{eqCorrispondezaCampiNonNecessariamenteBuoniAFamiglieDiCampi}
D(M\times \G )\ni Z\longmapsto \{Z_g\}_{g\in \G }
\end{equation}
 a bijection, i.e., to discover what is the right formalization of  a {\it sliceable vector field}. Following intuition, a vector field $Z$ is {\it sliceable} if it is tangent to all the slices $M_\g$'s, i.e.,  if it is $\pi_\G $--vertical. This motivates Definition \ref{defDefinizioneIngenuaDiFamiglieDiCampiDelProf} below.
%
 \begin{definition}\label{defDefinizioneIngenuaDiFamiglieDiCampiDelProf}
A $\G $--parametrized \emph{family of vector fields} on a manifold $M$ is a $\pi_\G $--vertical field on $M\times \G $.
\end{definition}
So, unlike a family of covariant quantities on $M$, which is a  $\pi_M$--horizontal quantity on $M\times\G$, a family of  such \virg{controvariant quantities} as vector fields on $M$,  is made of   $\pi_\G$--vertical quantities on $M\times\G$. Nonetheless,     Proposition \ref{propVertDer1} below shows that Definition \ref{defDefinizioneIngenuaDiFamiglieDiCampiDelProf} above---much as Definition \ref{defDefinizioneFamiglieOggettiCOvarianti} for covariant quantities---is but a particular cases of  Definition \ref{defDefinizioniFamiglieDiSezioniLungoUnaMappa}. \par
%
%
%
%
%
%
%
%
%
%
%
%
To this end, notice that   a $\pi_\G $-- (resp., $\pi_M$--)vertical vector field $Z$ on the cylinder $M\times \G $ is uniquely determined by its restriction $Z|_{C^\infty(M)}$ (resp., $Z|_{C^\infty(\G )}$) since, by definition, $Z$ vanishes on $C^\infty(\G )$ (resp., $C^\infty(M)$)  (see Lemma \ref{lemEstDerivCylind}).  But $Z|_{C^\infty(M)}$ is a $C^\infty(M\times \G )$--valued derivation of $C^\infty(M)$ (resp., $C^\infty(\G )$), i.e., a relative vector field along the map $\pi_M$ (resp., $\pi_\G $).
\begin{remark}
In the same coordinates as Remark \ref{exCoorLiftDueVett}, it is easy to see that $Z$ is a vector field along $\pi_M$ if and only if $Z=Z^i\frac{\partial}{\partial x^i}$, $Z^i\in C^\infty(M\times \G )$.
\end{remark} 
On the other hand, a vector field  $Z$ along $\pi_M$ is a section of the induced bundle $\pi_M^\circ(\tau_M)$ (see \cite{Nestruev}). Moreover, 
\begin{equation}\label{eqRelVecFld1}
D_{\pi_M}(M\times N)\df \Gamma (\pi_M^\circ(\tau_M)) 
\end{equation}
 is a sub--$C^\infty(M\times \G )$ of $D(M\times \G )$, naturally isomorphic to  $C^\infty(M\times N)\otimes_{C^\infty(M)}D(M)$.    Similarly if  $Z$ is a vector field along $\pi_\G $.  
 In other words, it  is natural to identify $\pi_\G $-- (resp., $\pi_M$--)vertical vector fields with vector fields along $\pi_M$ (resp., $\pi_\G $). Proposition \ref{propVertDer1} below, whose easy proof is omitted, shows the functoriality of such identification.
\begin{remark}
For any  $C^\infty(M\times \G )$--module $P$, define the submodule $D_{\pi_M}(P)\subseteq D(P)$ of $P$--valued derivations of  $C^\infty(M\times \G )$ along $\pi_M$. Then  the correspondence $P\longmapsto D_{\pi_M}(P)$  is a functor.
\end{remark}

\begin{proposition}\label{propVertDer1}
Functor $D_{\pi_M}$ (reps., $D_{\pi_\G }$) is naturally identified with functor of $\pi_\G $-- (resp., $\pi_M$--)vertical derivations.
\end{proposition}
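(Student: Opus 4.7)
The plan is to construct the natural isomorphism explicitly via restriction--and--extension, leaning entirely on Lemma~\ref{lemEstDerivCylind}; the rest is formal bookkeeping. I would only treat $D_{\pi_M}\cong D^v_{\pi_\G}$ (writing $D^v_{\pi_\G}$ for the functor of $\pi_\G$--vertical derivations), since the other identification is obtained by swapping the roles of $M$ and $\G$.

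For each $C^\infty(M\times\G)$--module $P$, the isomorphism $\Psi_P : D_{\pi_M}(P)\longrightarrow D^v_{\pi_\G}(P)$ would be produced as follows. Given $X\in D_{\pi_M}(P)$, regard it as a $P$--valued derivation of the subalgebra $\pi_M^\ast C^\infty(M)\subseteq C^\infty(M\times\G)$ and invoke Lemma~\ref{lemEstDerivCylind} with the zero derivation $Y=0$ on $C^\infty(\G)$: the unique $P$--valued derivation of $C^\infty(M\times\G)$ extending $X$ and annihilating $\pi_\G^\ast C^\infty(\G)$ is, by construction, $\pi_\G$--vertical, and this is what I would call $\Psi_P(X)$. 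Its inverse sends a $\pi_\G$--vertical $Z$ to the restriction $Z\circ\pi_M^\ast$. That these two maps are mutually inverse is immediate from the uniqueness clause of Lemma~\ref{lemEstDerivCylind}: a $\pi_\G$--vertical derivation is completely pinned down by its values on $\pi_M^\ast C^\infty(M)$ together with its vanishing on $\pi_\G^\ast C^\infty(\G)$, so extending $Z\circ\pi_M^\ast$ must reproduce $Z$.

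Naturality in $P$ will follow by a one--line argument: for any $C^\infty(M\times\G)$--linear $\phi:P\to Q$, the composite $\phi\circ\Psi_P(X)$ is a $Q$--valued derivation of $C^\infty(M\times\G)$ extending $\phi\circ X$ on $C^\infty(M)$ and vanishing on $\pi_\G^\ast C^\infty(\G)$, hence equals $\Psi_Q(\phi\circ X)$ by the same uniqueness clause. This yields exactly the commuting square required of a natural transformation $\Psi: D_{\pi_M}\Rightarrow D^v_{\pi_\G}$. The only substantive input is Lemma~\ref{lemEstDerivCylind}; once it is invoked, no real obstacle arises, which explains why the author declares the proof omitted.
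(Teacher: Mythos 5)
Your proof is correct and takes essentially the same route the paper intends: the paragraph preceding the Proposition already notes that a $\pi_\G$--vertical derivation is determined by its restriction to $C^\infty(M)$, and your restriction--extension argument via the uniqueness clause of Lemma~\ref{lemEstDerivCylind} (applied with $Y=0$), together with the one--line naturality check from the same uniqueness, is precisely the \virg{easy proof} the author omits.
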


Now that Definition \ref{defDefinizioneIngenuaDiFamiglieDiCampiDelProf} has become a particular case of Definition \ref{defDefinizioniFamiglieDiSezioniLungoUnaMappa}, it can be generalized to arbitrary differential operators.
\begin{definition}\label{defDefinizioneGENDelProf}
A $\G $--parametrized   \emph{family of differential operators} between  $C^\infty(M)$--modules  $P$ and $Q$ is a $C^\infty(\G )$--linear differential operator
\begin{equation}\label{eqDefFamOpDiff}
\Delta: C^\infty(M\times \G ) \otimes_{C^\infty(M)} P \longmapsto C^\infty(M\times \G ) \otimes_{C^\infty(M)} Q.
\end{equation}
\end{definition}


 %
 %
 %
%
If $P=\Gamma(\eta)$ and $Q=\Gamma(\xi)$, then operator   $\Delta$ from Definition \ref{defDefinizioneGENDelProf} can be naively interpreted as a a family   $\{\Delta_\g\}_{\g\in\G}$ of $\xi$--valued differential operator on $\eta$. Indeed, in this case \REF{eqDefFamOpDiff} reads 
\begin{equation*}
\Delta: \sigma\in\Gamma(\pi_M^\circ(\eta))\longmapsto \Delta(\sigma)\in \Gamma(\pi_M^\circ(\xi)),
\end{equation*}
i.e., $\Delta $ maps a  a family $\sigma$ of sections of $\eta$ into a family $\Delta(\sigma) $ of section of $\xi$, in such a way that $\Delta(\sigma)_\g=\Delta_\g(\sigma_\g)$, where $\Delta_g\in\Diff(\Gamma(\eta),\Gamma(\xi))$  (see   Definition \ref{defDefinizioniFamiglieDiSezioniLungoUnaMappa}). \par
%
%
\begin{example}
A $\G $--parametrized   \emph{family of derivations of the algebra $C^\infty(M)$ with values in a $C^\infty(M)$--module $P$} is a $C^\infty(\G )$--linear derivation $Z:C^\infty(M\times \G )\longrightarrow C^\infty(M\times \G )\otimes_{C^\infty(M)}P$.
%
By using  the same coordinates as Remark   \ref{exCoorLiftDueVett}, and   a local basis  $\{s^j\}$  of $P=\Gamma(\xi)$,  a family $Z$ of $P$--valued derivations can be represented as $Z=Z^i_j\otimes\frac{\partial}{\partial x^i}\otimes s^j$, with $Z_j^i\in C^\infty(M\times\G)$. Accordingly, $Z_\g=\iota_\g^\ast( Z^i_j)\frac{\partial}{\partial x^i}\otimes s^j$. 
\end{example}
%
%
%
%
%
%
%
%
\section{Vertical and horizontal differential forms}\label{seubsubFormeDiffOrizzontaliOrdineUno}
Proposition \ref{propVertDer1} above identifies the notion of a  {\it sliceable derivation} and, in particular, of a family of vector fields, with the functor $D_{\pi_M}$ of derivations along the canonical projection $\pi_M$. In this Section   we show that  $D_{\pi_M}$ is representable, and that its representative object is precisely the module of {\it horizontal} differential 1--forms, which, in its turn, according to Definition \ref{defDefinizioneFamiglieOggettiCOvarianti}, corresponds to families of differential 1--forms.\footnote{Indeed, differential 1--forms are special type of covariant quantities  (see Section \ref{subsubCovHorQuant}).}   In other words, families of differential forms can be thought of as the representative object of families of vector fields, in total agreement with  the logic of differential calculus.\par
Since families of vector fields correspond not only to the vector fields along $\pi_M$, but also to the  vertical $\pi_\G$--vector fields (Proposition \ref{propVertDer1}), it is natural to look for the representative object of vertical derivations. As   Lemma \ref{lemLemmaInutileMaCheCiVuolePerForza} below shows, such an object is the quotient of $\Lambda^1(M)$ w.r.t. the submodule of horizontal forms, in the sense of Definition \ref{defDefinitioneElementiEffeORizzontali}. However, when $\Phi=\Lambda^1$, Definition \ref{defDefinitioneElementiEffeORizzontali} gains an important geometrical meaning,  so it is worth specializing it here.\par
In order to have the most general definition, let $f:M\rightarrow N$ be a smooth map.
\begin{definition}\label{defUnoFormeOrizzontali}
The sub--module $\Lambda^1_f(M)$ of $\Lambda^1(M)$ generated by the image of $f^\ast$ is   the module of ($f$--)\emph{horizontal} 1--forms on $M$.
\end{definition}
Geometrically, a 1--form $\omega$ is {\it horizontal} when it is constant along the fibers of $f$, i.e.,   $i_X(\omega)=0 $ for all $f$--vertical vector fields $X\in D_f^v(M)$. Obviously,   $i_X(f^*(\eta))=0$, i.e.,   a 1--form which is horizontal in the sense of Definition \ref{defUnoFormeOrizzontali} is also horizontal in the geometrical sense.   Lemma \ref{lemLemmaInutileMaCheCiVuolePerForza} below shows that the converse holds as well.\par
Put 
\begin{equation}\label{eqUnoFormeVerticali}
\overline{\Lambda}_f^1(M)\df \frac{\Lambda^1(M)}{\Lambda_f^1(M)}.
\end{equation}
\begin{lemma}\label{lemLemmaInutileMaCheCiVuolePerForza}
 %
\begin{equation*}
D_f^v(M)\cong \Hom(\overline{\Lambda}_f^1(M), C^\infty(M)).
\end{equation*}
\end{lemma}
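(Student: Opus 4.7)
The plan is to deduce the isomorphism from the representability of the functor of derivations by $\Lambda^1(M)$, together with the universal property of the quotient \REF{eqUnoFormeVerticali}. Concretely, the strategy is to identify $D_f^v(M)$ inside the canonical isomorphism $D(M)\cong\Hom(\Lambda^1(M),C^\infty(M))$ (valid in the category of geometric modules over a smooth algebra, cfr.\ \cite{Nestruev}) as exactly the sub--module of those homomorphisms that annihilate $\Lambda_f^1(M)$, and then invoke the obvious fact that this sub--module is naturally isomorphic to $\Hom(\Lambda^1(M)/\Lambda_f^1(M),C^\infty(M))=\Hom(\overline{\Lambda}_f^1(M),C^\infty(M))$.

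First I would recall the standard identification $X\longleftrightarrow i_X=\omega\mapsto\omega(X)$ which gives $D(M)\cong\Hom(\Lambda^1(M),C^\infty(M))$. Second, I would characterize $f$--verticality in dual language: a vector field $X\in D(M)$ is $f$--vertical precisely when $i_X|_{\Lambda_f^1(M)}=0$. The direction $(\Rightarrow)$ uses that a general generator of $\Lambda_f^1(M)$ has the form $f^\ast(\eta)$ with $\eta=g_j\,dh_j\in\Lambda^1(N)$, so that
\begin{equation*}
i_X(f^\ast(\eta)) \;=\; f^\ast(g_j)\,i_X(d(f^\ast(h_j))) \;=\; f^\ast(g_j)\,X(f^\ast(h_j))\;=\;0,
\end{equation*}
and $C^\infty(M)$--linearity extends the vanishing from generators to the whole sub--module. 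The direction $(\Leftarrow)$ is immediate: taking $\omega=d(f^\ast(h))\in\Lambda_f^1(M)$ yields $X(f^\ast(h))=i_X(\omega)=0$ for every $h\in C^\infty(N)$, which is precisely $f$--verticality. Third, by the universal property of the quotient, a $C^\infty(M)$--homomorphism $\Lambda^1(M)\to C^\infty(M)$ vanishing on $\Lambda_f^1(M)$ is the same thing as a $C^\infty(M)$--homomorphism $\overline{\Lambda}_f^1(M)\to C^\infty(M)$. Composing the three identifications yields the desired isomorphism, which moreover is evidently natural in $M$ and sends $X\in D_f^v(M)$ to the map $[\omega]\mapsto i_X(\omega)$.

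The step that requires some care (and which is, I expect, the only non--routine point) is the second one, namely the non--trivial direction of the characterization of $f$--verticality dually: one has to make sure that annihilation on the \emph{generators} $f^\ast(dh)$ forces annihilation on the whole $C^\infty(M)$--submodule $\Lambda_f^1(M)$, a fact that depends on the geometric nature of the modules involved and on the Leibniz rule $i_X(fd g)=f\,X(g)$. Once this is granted, the remainder is purely formal and reduces to the functorial identifications sketched above.
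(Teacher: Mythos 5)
Your proposal is correct and follows essentially the same route as the paper: both pass through the duality $D(M)\cong\Hom(\Lambda^1(M),C^\infty(M))$, identify $D_f^v(M)$ as the annihilator of $\Lambda_f^1(M)$ under this identification, and conclude via the universal property of the quotient $\overline{\Lambda}_f^1(M)=\Lambda^1(M)/\Lambda_f^1(M)$. The only difference is that you spell out the two directions of the dual characterization of $f$--verticality (via the generators $f^\ast(g\,dh)$ and the Leibniz rule), which the paper leaves implicit.
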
 
\begin{proof}
In view of  the isomorphism 
\begin{equation}\label{eqEqDualCamForm}
D(M)\ni X\leftrightarrow i_X\in\Hom(\Lambda^1(M), C^\infty(M)),
\end{equation}
 a submodule  $Q\subseteq\Lambda^1(M)$ must exist, such that $D_f^v=\Ann(Q)$. But a vector field $X\in D(M)$ is $f$--vertical if and only if $i_X(\omega)=0$ for any  $\omega\in \Lambda_f^1(M)$,  so $Q=\Lambda^1_f(M)$.
\end{proof}
\begin{definition}\label{defUnoFormeVerticali}
An element  $[\omega]_{\Lambda_f^1(M)}$ of the quotient module \REF{eqUnoFormeVerticali} is called an  \hbox{($f$--)}\emph{vertical 1--form}, and is   denoted  by $\overline{\omega}$. 
\end{definition}
%
%
%
\begin{corollary}\label{exEsercizioInCuiSonoConvogliateTutteLeSeccatureFuntorialiSuFormeVerticali}
The functor $D_f^v$  is represented by the module of $f$--vertical 1--forms, and   the natural embedding $D_f^v\subseteq D$ corresponds to the canonical projection\linebreak   $\Lambda^1(M)\longrightarrow \frac{\Lambda^1(M)}{\Lambda_f^1(M)}$ of representative objects.
\end{corollary}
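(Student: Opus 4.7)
The plan is to upgrade Lemma \ref{lemLemmaInutileMaCheCiVuolePerForza} from scalar-valued to $P$-valued derivations, and then use Yoneda to identify the natural embedding of functors with the canonical projection of their representing objects. Concretely, I would invoke the general representability $D(P)\cong\Hom(\Lambda^1(M),P)$ available for any $C^\infty(M)$-module $P$, under which a derivation $X\in D(P)$ corresponds to the contraction $h_X$ with $h_X(dg)=X(g)$. Arguing as in Lemma \ref{lemLemmaInutileMaCheCiVuolePerForza}, $X$ is $f$-vertical (i.e., $X\circ f^\ast=0$) iff $h_X$ kills every $f^\ast(dg)=d(f^\ast g)$, and then by $C^\infty(M)$-linearity iff $h_X$ vanishes on the whole submodule $\Lambda_f^1(M)$. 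Hence $h_X$ factors uniquely through the canonical projection $p:\Lambda^1(M)\to\overline{\Lambda}_f^1(M)$, producing a bijection $D_f^v(P)\cong\Hom(\overline{\Lambda}_f^1(M),P)$.

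Naturality in $P$ follows by a short check: for a $C^\infty(M)$-homomorphism $\varphi:P\to Q$, postcomposition with $\varphi$ preserves $f$-verticality on one side and factorization through $p$ on the other, so the pointwise bijections glue into a natural isomorphism of functors. This establishes that $\overline{\Lambda}_f^1(M)$ represents $D_f^v$ in the full category of $C^\infty(M)$-modules.

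For the second assertion, I would appeal to Yoneda. Under the identifications above, the inclusion $D_f^v(P)\hookrightarrow D(P)$ reads precisely as $\alpha\mapsto\alpha\circ p$ for $\alpha\in\Hom(\overline{\Lambda}_f^1(M),P)$; therefore the morphism of representing objects which classifies the natural embedding of functors is the canonical projection $p$ itself. The only mildly delicate point along the way is promoting the vanishing of $h_X$ on the generators $f^\ast(dg)$ to vanishing on their whole $C^\infty(M)$-linear span $\Lambda_f^1(M)$, which is immediate from $C^\infty(M)$-linearity of $h_X$ but deserves to be spelled out once.
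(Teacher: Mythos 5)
Your argument is correct and is essentially the one the paper intends: the corollary carries no proof precisely because it is meant to follow from Lemma \ref{lemLemmaInutileMaCheCiVuolePerForza} by replacing $C^\infty(M)$ with an arbitrary module $P$ (the annihilator computation is unchanged, since vanishing on the generators $d(f^\ast g)$ propagates to all of $\Lambda^1_f(M)$ by $C^\infty(M)$--linearity), after which naturality in $P$ and Yoneda identify the inclusion $D_f^v\subseteq D$ with the canonical projection $\Lambda^1(M)\longrightarrow\overline{\Lambda}{}_f^1(M)$ of representing objects. One small correction: the representability $D(P)\cong\Hom(\Lambda^1(M),P)$, and hence your conclusion, holds in the category of \emph{geometric} $C^\infty(M)$--modules (cf.\ Corollary \ref{corUnPoStupido} and the discussion in the Introduction), not in the full module category.
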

%
%
Consider now the   cylinder $M\times\G$. In this case, the peculiar geometry of the manifold $M\times\G$ allows to identify vertical forms with respect to one projection with horizontal forms with respect to the other one. Details are as follows.
\begin{lemma}  
\begin{equation}\label{eqLeibnizRuleLambdaOne}
\Lambda^1(M\times \G )=\left(\Lambda^1(M)\overline{\otimes}_\R C^\infty(\G )\right)\oplus\left(C^\infty(M)\overline{\otimes}_\R\Lambda^1(\G )\right),
\end{equation}
\end{lemma}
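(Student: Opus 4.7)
The statement is the smooth Leibniz rule for the representative of derivations on the tensor product algebra $C^\infty(M\times\G)\cong C^\infty(M)\overline{\otimes}_{\R}C^\infty(\G)$. I would prove it in three moves: realize each summand as a concrete sub-module of $\Lambda^1(M\times\G)$, check that the sum is direct by contracting with the canonical lifts of vector fields from the two factors, and finally verify that it spans using the local coordinate description.

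For the first move, the $\R$-bilinear assignment $(\omega,\varphi)\longmapsto \pi_\G^\ast(\varphi)\cdot\pi_M^\ast(\omega)$ extends, through the smoothened tensor product (whose universal property takes values in geometric $C^\infty(M\times\G)$-modules), to a $C^\infty(M\times\G)$-linear map $\Lambda^1(M)\overline{\otimes}_{\R}C^\infty(\G)\to\Lambda^1(M\times\G)$ whose image is precisely the module $\Lambda^1_{\pi_M}(M\times\G)$ of $\pi_M$-horizontal 1-forms in the sense of Definition \ref{defUnoFormeOrizzontali}. Symmetrically, one gets $C^\infty(M)\overline{\otimes}_{\R}\Lambda^1(\G)\to\Lambda^1_{\pi_\G}(M\times\G)$. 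Injectivity of these maps follows because a horizontal form is nonzero as soon as its local coefficients are, a fact already encoded in Lemma \ref{lemLemmaInutileMaCheCiVuolePerForza} via the duality \REF{eqEqDualCamForm}.

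For directness, suppose $\alpha+\beta=0$ with $\alpha\in\Lambda^1_{\pi_M}(M\times\G)$ and $\beta\in\Lambda^1_{\pi_\G}(M\times\G)$. By Lemma \ref{lemLemmaInutileMaCheCiVuolePerForza}, $\alpha$ annihilates every $\pi_M$-vertical derivation and $\beta$ annihilates every $\pi_\G$-vertical one. Corollary \ref{corCanExtVectFieldCylind} says that every $Y\in D(\G)$ lifts to a $\pi_M$-vertical field $\widetilde Y$ on the cylinder, so contracting the identity $\alpha=-\beta$ with $\widetilde Y$ gives $i_{\widetilde Y}(\beta)=0$; analogously, using canonical lifts $\widetilde X$ of $X\in D(M)$, we obtain $i_{\widetilde X}(\alpha)=0$. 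Since, by Remark \ref{exCoorLiftDueVett}, canonical lifts of the two kinds locally generate $D(M\times\G)$, the duality \REF{eqEqDualCamForm} forces $\alpha=\beta=0$.

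For spanning, write a 1-form in the coordinates of Remark \ref{exCoorLiftDueVett} as $a_i(x,y)\,dx^i+b_j(x,y)\,dy^j$; the first block lies in $\Lambda^1_{\pi_M}$ and the second in $\Lambda^1_{\pi_\G}$. The main obstacle is the passage from this pointwise decomposition to one in the smoothened tensor products globally. I would patch the local splittings via a partition of unity subordinate to a product atlas of $M\times\G$: the resulting $\pi_M$- and $\pi_\G$-horizontal pieces are global sections of $\Lambda^1(M\times\G)$ that realize the required decomposition, and they belong to the claimed smoothened tensor products precisely because of the identification $C^\infty(M\times\G)=C^\infty(M)\overline{\otimes}_{\R}C^\infty(\G)$. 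An alternative, more conceptual route is to dualize the canonical splitting $D(M\times\G)=D^v_{\pi_M}(M\times\G)\oplus D^v_{\pi_\G}(M\times\G)$ implicit in Lemma \ref{lemEstDerivCylind}, and transport it via representability of $D$ by $\Lambda^1$; the fact that the smoothened tensor product is tailor-made to preserve such dualities makes both routes ultimately equivalent.
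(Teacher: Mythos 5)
Your proposal is essentially sound, but note that the paper does not prove this lemma at all: its ``proof'' is a citation to \cite{DeParis}, so any honest argument you give is automatically a different (and more informative) route. Your three moves are the right ones. The directness argument is clean: contracting $\alpha=-\beta$ with the canonical lifts $\widetilde X$, $\widetilde Y$, which locally exhaust $D(M\times\G)$ as a $C^\infty(M\times\G)$--module by Remark \ref{exCoorLiftDueVett}, does force $\alpha=\beta=0$ via the duality \REF{eqEqDualCamForm}. Two points deserve more care than you give them. First, the identification of $\Lambda^1(M)\overline{\otimes}_\R C^\infty(\G)$ with the submodule of $\Lambda^1(M\times\G)$ generated by $\im\pi_M^\ast$ (equivalently with $\Lambda^1(M)\otimes_{C^\infty(M)}C^\infty(M\times\G)$) is not a formality: it is exactly the content of the smoothened tensor product, and it is the step the paper itself later leans on in the proof of Proposition \ref{propEqVertHorForms}; you should either prove it or flag it as the imported ingredient from \cite{DeParis}. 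Second, in the spanning step your partition-of-unity patching produces a priori only a locally finite sum of elements of the form $f\,\pi_M^\ast(\omega)$, whereas the submodule generated by $\im\pi_M^\ast$ consists of finite sums; to close this gap you should invoke finite generation and projectivity of $\Lambda^1(M)$ (embed $T^\ast M$ in a trivial bundle of finite rank), after which the locally finite sum collapses to a finite one. Your alternative route --- dualizing the splitting of derivations implicit in Lemma \ref{lemEstDerivCylind} and transporting it through representability of $D$ by $\Lambda^1$ --- is closer in spirit to how the cited reference proceeds, and has the advantage of not touching coordinates, but it shifts the burden onto knowing that $\overline{\otimes}$ commutes with the representing-object construction, which again is the real theorem being cited.
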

\begin{proof}See \cite{DeParis}.
\end{proof}
\begin{proposition}\label{propEqVertHorForms}
$\pi_M$-- (resp., $\pi_\G $--)vertical 1--forms on $M\times \G $ are identified with $\pi_\G $-- (resp., $\pi_M$-- )horizontal 1--forms.
\end{proposition}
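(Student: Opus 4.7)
The plan is to read off the result directly from the direct sum decomposition \REF{eqLeibnizRuleLambdaOne}. First I would observe that, under the natural inclusions $\pi_M^\ast:\Lambda^1(M)\hookrightarrow\Lambda^1(M\times\G)$ and $\pi_\G^\ast:\Lambda^1(\G)\hookrightarrow\Lambda^1(M\times\G)$, the two summands
\[
H_M\df\Lambda^1(M)\overline{\otimes}_\R C^\infty(\G),\qquad H_\G\df C^\infty(M)\overline{\otimes}_\R\Lambda^1(\G)
\]
of \REF{eqLeibnizRuleLambdaOne} are precisely the $C^\infty(M\times\G)$--submodules generated by $\pi_M^\ast(\Lambda^1(M))$ and $\pi_\G^\ast(\Lambda^1(\G))$, respectively. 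Indeed, a generic element of $H_M$ has the form $\sum g_j\pi_M^\ast(\omega^j)$ with $g_j\in C^\infty(\G)\subseteq C^\infty(M\times\G)$ (and similarly, by density/smoothening, for general coefficients $f_j\in C^\infty(M\times\G)$). In the terminology of Definition \ref{defUnoFormeOrizzontali}, this says
\[
H_M=\Lambda^1_{\pi_M}(M\times\G),\qquad H_\G=\Lambda^1_{\pi_\G}(M\times\G).
\]

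Next I would invoke Definition \ref{defUnoFormeVerticali}: the module of $\pi_M$--vertical $1$--forms is the quotient
\[
\overline{\Lambda}^1_{\pi_M}(M\times\G)=\frac{\Lambda^1(M\times\G)}{\Lambda^1_{\pi_M}(M\times\G)}=\frac{H_M\oplus H_\G}{H_M}.
\]
The direct sum structure makes the canonical projection restrict to an isomorphism $H_\G\stackrel{\sim}{\longrightarrow}\overline{\Lambda}^1_{\pi_M}(M\times\G)$; symmetrically, $H_M\stackrel{\sim}{\longrightarrow}\overline{\Lambda}^1_{\pi_\G}(M\times\G)$. This is the desired identification of $\pi_M$--vertical with $\pi_\G$--horizontal $1$--forms (and vice versa).

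Only two routine points need a word of justification. First, the identification of the horizontal submodules with the summands requires that the $C^\infty(M\times\G)$--span of $\pi_M^\ast(\Lambda^1(M))$ equals $H_M$; this follows because $C^\infty(M\times\G)$ itself is generated, as a smoothened tensor product, by $\pi_M^\ast C^\infty(M)\cdot\pi_\G^\ast C^\infty(\G)$, so absorbing the $\pi_M^\ast C^\infty(M)$ factor into $\Lambda^1(M)$ leaves exactly $\Lambda^1(M)\overline{\otimes}_\R C^\infty(\G)$. Second, one should check that the isomorphism is natural, i.e., compatible with the embeddings of $D_{\pi_\G}^v$ and $D_{\pi_M}$ into $D(M\times\G)$ via Corollary \ref{exEsercizioInCuiSonoConvogliateTutteLeSeccatureFuntorialiSuFormeVerticali} and Proposition \ref{propVertDer1}; this is immediate from the fact that both sides annihilate the corresponding factor in the decomposition.

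The main obstacle, if any, is purely notational: one has to handle carefully the smoothened tensor product $\overline{\otimes}_\R$ versus ordinary $\otimes_\R$ when identifying the submodule generated by $\pi_M^\ast(\Lambda^1(M))$ with $H_M$. Once the decomposition \REF{eqLeibnizRuleLambdaOne} is accepted and the horizontal submodules are correctly identified with its summands, the proposition is an essentially tautological consequence of the general fact that the quotient of a direct sum by one summand is canonically isomorphic to the other.
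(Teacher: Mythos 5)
Your argument is correct and is essentially the paper's own proof: both identify the two summands of \REF{eqLeibnizRuleLambdaOne} with $\Lambda^1_{\pi_M}(M\times\G)$ and $\Lambda^1_{\pi_\G}(M\times\G)$ (via $\Lambda^1(M)\overline{\otimes}_\R C^\infty(\G)=\Lambda^1(M)\otimes_{C^\infty(M)}C^\infty(M\times\G)$, the submodule generated by $\im\pi_M^\ast$), so that \REF{eqLeibnizRuleLambdaOne} becomes the direct sum \REF{eqSplitForms1} and the quotient by one summand is canonically the other. You merely make explicit the last step, which the paper leaves implicit.
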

\begin{proof}
To prove both assertions, it suffices to interpret  \REF{eqLeibnizRuleLambdaOne} as
\begin{equation}\label{eqSplitForms1}
\Lambda^1(M\times \G )=\Lambda_{\pi_M}^1(M\times \G )\oplus\Lambda_{\pi_\G }^1(M\times \G ).
\end{equation}
In fact,  \REF{eqSplitForms1} follows from \REF{eqLeibnizRuleLambdaOne} since   $\Lambda^1(M)\overline{\otimes}_\R C^\infty(\G )$ coincides with  $\Lambda^1(M) \otimes_{C^\infty(M)}  C^\infty(M\times \G )$,\footnote{See \cite{DeParis} concerning the smoothened tensor product.} which is the submodule of $\Lambda^1(M\times \G )$ generated by $\im\pi_M^*$. The same for $\Lambda_{\pi_\G }^1(M\times \G )$.
\end{proof}
Combining Proposition \ref{propEqVertHorForms} above with Proposition \ref{propVertDer1} and Corollary \ref{exEsercizioInCuiSonoConvogliateTutteLeSeccatureFuntorialiSuFormeVerticali}, we easily obtain the next Corollary \ref{corUnPoStupido}.
\begin{corollary}\label{corUnPoStupido}
Functor $D_{\pi_M}$ (resp., $D_{\pi_\G }$) is represented by  $\Lambda_{\pi_M}^1(M\times \G )$ (resp.  $\Lambda_{\pi_\G }^1(M\times \G )$) in the category of geometric $C^\infty(M\times \G )$--modules. 
\end{corollary}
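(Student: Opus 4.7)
The plan is essentially to chain together the three preceding results, which have been arranged so that the corollary drops out as their composition. I would state the argument only for $D_{\pi_M}$, the case of $D_{\pi_\G}$ being symmetric.

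First, I would invoke Proposition \ref{propVertDer1} to replace the functor $D_{\pi_M}$ of derivations along $\pi_M$ with the equivalent functor $D^v_{\pi_\G}$ of $\pi_\G$-vertical derivations on the cylinder $M\times\G$. Since this identification is natural in the module argument, representability of one functor is equivalent to representability of the other, and the representing objects are forced to coincide.

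Next, I would apply Corollary \ref{exEsercizioInCuiSonoConvogliateTutteLeSeccatureFuntorialiSuFormeVerticali} to the smooth map $f=\pi_\G$ on the manifold $M\times\G$, obtaining that $D^v_{\pi_\G}$ is represented, in the category of geometric $C^\infty(M\times\G)$-modules, by the module $\overline{\Lambda}^1_{\pi_\G}(M\times\G)$ of $\pi_\G$-vertical 1-forms (defined via the quotient \REF{eqUnoFormeVerticali}). Combining with the previous step yields the interim statement that $D_{\pi_M}$ is represented by $\overline{\Lambda}^1_{\pi_\G}(M\times\G)$.

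Finally, I would use Proposition \ref{propEqVertHorForms}, whose direct split \REF{eqSplitForms1} identifies $\pi_\G$-vertical 1-forms on the cylinder with $\pi_M$-horizontal 1-forms, so that
\[
\overline{\Lambda}^1_{\pi_\G}(M\times\G)\;\cong\;\Lambda^1_{\pi_M}(M\times\G).
\]
Transporting representability along this isomorphism gives the claim. The symmetric case $D_{\pi_\G}\leftrightarrow \Lambda^1_{\pi_\G}(M\times\G)$ follows by swapping the roles of $M$ and $\G$ throughout.

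There is no real obstacle here; the only thing one should verify is that the three identifications are natural transformations of functors (not merely isomorphisms of modules), so that they compose to a genuine representation of $D_{\pi_M}$ by $\Lambda^1_{\pi_M}(M\times\G)$. This is clear in each step, since Proposition \ref{propVertDer1} is a natural identification of derivations, Corollary \ref{exEsercizioInCuiSonoConvogliateTutteLeSeccatureFuntorialiSuFormeVerticali} is derived from the canonical isomorphism \REF{eqEqDualCamForm} through the quotient construction, and the decomposition \REF{eqSplitForms1} is functorially induced by $\pi_M^\ast$ and $\pi_\G^\ast$.
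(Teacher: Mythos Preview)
Your proof is correct and follows exactly the approach indicated in the paper, which states just before the corollary that it is obtained by combining Proposition \ref{propEqVertHorForms}, Proposition \ref{propVertDer1}, and Corollary \ref{exEsercizioInCuiSonoConvogliateTutteLeSeccatureFuntorialiSuFormeVerticali}. You have spelled out this chain of identifications in the right order, and your remark on naturality is a reasonable addition.
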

  In other words, derivations along $\pi_M$ (resp.,  $\pi_\G$) may be called $\pi_M$-- (resp.,  $\pi_\G$--)  \emph{horizontal  derivations}, since they are represented by $\pi_M$-- (resp.,  $\pi_\G$--)horizontal differential 1--forms. This way, in total analogy with  covariant quantities, sliceable derivations coincide with the horizontal ones, and Corollary \ref{corUnPoStupido} reads as the \virg{horizontal version} of the duality \REF{eqEqDualCamForm} between derivations and 1--forms.\par
%
%
%
In order to define families of higher--order differential forms, it is convenient to denote  by $\Lambda^{p,q}(M\times \G )$ the submodule of $\Lambda^{p+q}(M\times \G )$ generated  by\linebreak $\pi_M^\ast(\Lambda^p(M))\otimes_{C^\infty(M\times \G )}\pi_\G ^\ast(\Lambda^q(\G ))$.
\begin{definition}
 Elements of $\Lambda^{p,q}(M\times \G )$ are the \emph{forms of type $(p,q)$} on the cylinder $M\times\G$.
\end{definition}
\begin{corollary} 
The direct sum decomposition
\begin{equation}\label{eqSplitForms2}
\Lambda^k(M\times \G)=\bigoplus_{i=0}^k\Lambda^{i,k-i}(M\times \G )
\end{equation}
holds.
\end{corollary}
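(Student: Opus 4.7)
The plan is to proceed by induction on $k$, treating the base case $k=1$ as essentially the content of Proposition \ref{propEqVertHorForms}, which after identification reads $\Lambda^1(M\times\G)=\Lambda^{1,0}(M\times\G)\oplus\Lambda^{0,1}(M\times\G)$. The coefficient $\binom{1}{0}+\binom{1}{1}=2$ of summands on the right matches the two types of $1$--forms, so the case $k=1$ is immediate.

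For the inductive step, the idea is to exploit the fact that, over the geometric algebra $C^\infty(M\times\G)$, the module $\Lambda^k(M\times\G)$ is generated by $k$--fold wedge products of elements of $\Lambda^1(M\times\G)$. Given such a generator $\omega_1\wedge\cdots\wedge\omega_k$, I would apply the base case to each factor, writing $\omega_j=\alpha_j+\beta_j$ with $\alpha_j\in\Lambda^{1,0}$ and $\beta_j\in\Lambda^{0,1}$, and expand. Each term in the expansion is a wedge with $i$ factors of type $(1,0)$ and $k-i$ factors of type $(0,1)$ for some $i$; using super--commutativity of the exterior algebra one reorders such a term as a product of a pure $(1,0)$--part and a pure $(0,1)$--part, which belongs to $\Lambda^{i,k-i}(M\times\G)$ by definition. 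This yields $\Lambda^k(M\times\G)=\sum_{i=0}^{k}\Lambda^{i,k-i}(M\times\G)$.

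The genuine obstacle is \emph{directness} of the sum: one must rule out nontrivial relations between components of different bidegree. My plan is to use local coordinates $\{x^1,\ldots,x^n\}$ on $M$ and $\{y^1,\ldots,y^m\}$ on $\G$ as in Remark \ref{exCoorLiftDueVett}, so that $\Lambda^{p,q}(M\times\G)$ is locally spanned by the monomials $f\cdot\pi_M^\ast(dx^{I})\wedge\pi_\G^\ast(dy^{J})$ with $|I|=p$, $|J|=q$. The splitting \REF{eqSplitForms1}, applied factorwise, identifies these monomials with the standard basis of $\Lambda^k$ of the cylinder, and no linear relation can mix distinct multi--indices. Globally, directness then follows from the geometricity of the modules involved, since vanishing of a sum $\sum\omega_{i,k-i}$ in every chart forces each bidegree component to vanish, as in the smoothened tensor product framework of \cite{DeParis}.

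Equivalently---and this would be the cleanest write--up---I could invoke the general identity $\bigwedge^k(V\oplus W)\cong\bigoplus_{i=0}^{k}\bigwedge^i V\otimes\bigwedge^{k-i}W$ for the exterior power of a direct sum of projective modules, applied to $V=\Lambda^{1,0}$ and $W=\Lambda^{0,1}$ over $C^\infty(M\times\G)$, together with the observation that the wedge product realizes the natural map $\bigwedge^i\Lambda^{1,0}\otimes\bigwedge^{k-i}\Lambda^{0,1}\to\Lambda^{i,k-i}$ as an isomorphism. Either route yields \REF{eqSplitForms2}.
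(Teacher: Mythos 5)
Your proposal is correct and follows the same route as the paper, whose entire proof is that the statement \virg{easily follows from} the degree--one splitting \REF{eqSplitForms1}; you have simply spelled out the standard exterior--power--of--a--direct--sum argument that the paper leaves implicit. No issues.
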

\begin{proof}
Easy follows from \REF{eqSplitForms1}.
\end{proof}
%

%
%
 %
 %
%
%
Definition \ref{defFamKappaForme} below complies with the general definition of families of objects (see Definition \ref{defDefinizioniFamiglieDiSezioniLungoUnaMappa}).
\begin{definition}\label{defFamKappaForme}
 A $\G$--parametrized family of $k$--forms is an element of the module $\Lambda^{k}_{\pi_M}(M\times\G)=\Lambda^{k,0}(M\times\G)$.
\end{definition}
However, the geometrical content of Definition \ref{defFamKappaForme} is not self--evident, and we may look for alternative ways to define families of higher order differential forms. Geometrically, since  horizontal 1--forms are annihilated by vertical vector fields (see Lemma \ref{lemLemmaInutileMaCheCiVuolePerForza}),   horizontal $k$--forms may be defined as those that are annihilated by vertical $k$--multivector fields, namely,
\begin{equation}\label{eqDefIdealeIConEffe}
\mathcal{I}_f \df \{ \omega\in\Lambda^+(M)\ |\ \omega(X_1,\ldots,X_{k})=0\quad\forall X_i,\ldots,X_{k}\in D^v,\ k=\deg(\omega)\}.
\end{equation}
But it is also possible to generalize Definition \ref{defUnoFormeOrizzontali},   so that horizontal forms of positive degree are given by   the ideal $<f^*(\Lambda^+(N))>$ of $\Lambda(M)$ generated by the image via $f^*$ of positive degree forms $\Lambda^+(N)$ on $N$.\par
%
Obviously,  $<f^*(\Lambda^+(N))>\subseteq \mathcal{I}_f$ and $\mathcal{I}_f $ is a differential ideal of $\Lambda(M)$.
\begin{lemma}\label{exEsercizioDoppioSugliIdeali}
 If $f$ is regular, then $\mathcal{I}_f =<f^*(\Lambda^+(N))>$.
\end{lemma}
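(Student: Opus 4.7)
The plan is to reduce to a purely local computation in coordinates adapted to the submersion $f$, and then reassemble the local data via a partition of unity and bump functions on $N$. The inclusion $\langle f^*(\Lambda^+(N))\rangle\subseteq\mathcal{I}_f$ was already noted in the text, so the task is the reverse inclusion, and this is exactly where the regularity hypothesis enters.

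Since $f$ is regular (a submersion), every point of $M$ admits a chart $U$ with coordinates $(x^1,\ldots,x^m,y^1,\ldots,y^n)$ and a chart $V=f(U)\subseteq N$ with coordinates $(\tilde{x}^1,\ldots,\tilde{x}^m)$ such that $x^i=f^*(\tilde{x}^i)$ and $f$ looks like $(x,y)\mapsto x$. In $U$ the $f$-vertical fields $D_f^v$ are the $C^\infty(U)$-span of $\partial/\partial y^j$. Expanding $\omega\in\mathcal{I}_f$ locally as $\omega|_U=\sum_{|I|+|J|=k} a_{IJ}\,dx^I\wedge dy^J$ (with $k=\deg\omega$) and evaluating on the tuple $(\partial/\partial y^{j_1},\ldots,\partial/\partial y^{j_k})$ picks out the coefficients with $I=\emptyset$, so the hypothesis forces $a_{\emptyset,J}\equiv 0$. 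Only terms with $|I|\geq 1$ survive, and each such monomial carries a factor $dx^I=f^*(d\tilde{x}^I)$, so locally $\omega$ belongs to the ideal generated by $f^*(\Lambda^+(V))$.

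To globalize I would pick a locally finite cover $\{U_\alpha\}$ by charts as above, a partition of unity $\{\rho_\alpha\}$ on $M$ subordinate to it, and, for each $\alpha$, a bump function $\sigma_\alpha\in C^\infty(N)$ supported in $V_\alpha=f(U_\alpha)$ and identically $1$ on an open neighborhood of $f(\mathrm{supp}\,\rho_\alpha)$. Then the local forms $\sigma_\alpha\,d\tilde{x}^{i_1}\wedge\cdots\wedge d\tilde{x}^{i_s}$ extend by zero to global forms $\eta_\alpha^I\in\Lambda^+(N)$, while $\rho_\alpha a_{IJ}\,dy^J$ extends by zero to a global $\beta_\alpha^{IJ}\in\Lambda(M)$. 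Because $f^*\sigma_\alpha\equiv 1$ on $\mathrm{supp}\,\rho_\alpha$, one checks $\beta_\alpha^{IJ}\wedge f^*(\eta_\alpha^I)=\rho_\alpha a_{IJ}\,dy^J\wedge dx^I$ as global forms on $M$; summing the (locally finite) family over $\alpha,I,J$ recovers $\omega=\sum_\alpha\rho_\alpha\omega$ as a finite-at-each-point sum of wedge products of forms on $M$ with pullbacks of positive-degree forms on $N$, which puts $\omega$ in $\langle f^*(\Lambda^+(N))\rangle$.

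The only subtle step is this last globalization: neither the coordinate forms $d\tilde{x}^i$ nor the coefficients $a_{IJ}$ are defined on all of $M$ or $N$, so one must genuinely exploit $\sigma_\alpha$ on $N$ together with $\rho_\alpha$ on $M$ (and the compatibility $f^*\sigma_\alpha=1$ on $\mathrm{supp}\,\rho_\alpha$) to promote local-ideal membership to a global one. The local computation itself is routine linear algebra on mixed $(dx,dy)$-monomials.
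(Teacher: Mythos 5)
Your argument is correct, and it supplies considerably more than the paper does: the paper's entire proof of this lemma is the sentence ``Well--known result in the theory of distributions'', so you are in effect proving the folklore statement the paper merely invokes. Your route --- the local normal form for a submersion, the observation that vanishing on all vertical $k$--tuples kills exactly the coefficients $a_{\emptyset,J}$ so that every surviving monomial carries a factor $dx^I=f^\ast(d\tilde{x}^I)$, and the globalization pairing a partition of unity $\{\rho_\alpha\}$ on $M$ with bump functions $\sigma_\alpha$ on $N$ --- is the standard one, and the care you take with the $\sigma_\alpha$ (so that the local generators $d\tilde{x}^I$ become genuine elements of $\Lambda^+(N)$, which is the step usually glossed over) is exactly right. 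Do make sure the $\rho_\alpha$ are chosen with compact support in $U_\alpha$, so that $f(\mathrm{supp}\,\rho_\alpha)$ is a compact subset of $V_\alpha$ and the required $\sigma_\alpha$ exists.

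One refinement is worth recording, because in the algebraic framework of this paper the ideal $\langle f^\ast(\Lambda^+(N))\rangle$ consists by definition of \emph{finite} sums $\sum_i\beta_i\wedge f^\ast(\eta^i)$: your final expression $\omega=\sum_\alpha\rho_\alpha\omega$ exhibits $\omega$ only as a \emph{locally finite} combination of generators, which is not quite membership in the algebraic ideal. This is easily repaired. Embed $N$ in some $\R^K$, so that $\Lambda^1(N)$ is generated over $C^\infty(N)$ by the $K$ exact forms $du^a$ and hence $\Lambda^+(N)=\Lambda(N)\wedge\Lambda^1(N)$ is generated, as an ideal, by $du^1,\dots,du^K$. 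Writing each of your $\eta^I_\alpha$ as a finite combination $\sum_a\gamma^a_{\alpha,I}\wedge du^a$ and regrouping, the locally finite sum over $\alpha$ collapses into the finite expression $\omega=\sum_{a=1}^K\delta^a\wedge f^\ast(du^a)$, where each $\delta^a$ is a locally finite --- hence well--defined --- sum of global forms on $M$. With this last step added, your proof is complete.
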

\begin{proof}
 Well--known result in the theory of distributions. 
\end{proof}
So, under regularity conditions for $f$ (always assumed in the sequel), the algebraic notion of horizontal forms is geometrically interpreted in the context of  distributions, thus motivating Definition \ref{defFormeOrizzontali} below.
%
%
\begin{definition}\label{defFormeOrizzontali}
  $<f^*(\Lambda^+(N))>$ is the \emph{ideal  of horizontal ($f$)--forms}.
\end{definition}
%
%
%
To unveil the geometrical content of Definition \ref{defFormeOrizzontali}, observe that   the  family 
\begin{equation}\label{eqFamDistrVert}
M\ni x \longmapsto \ker d_xf\subseteq T_xM
\end{equation}
of tangent subspaces is  a Fr\"obenius distribution, whose maximal integral submanifolds are the fibers of $f$. \REF{eqFamDistrVert} is  called the  \emph{$f$--vertical distribution on $M$} and  is denoted by $\mathcal{V}_f$. Then the annihilator $\mathcal{V}_f\Lambda (M)$ of $\mathcal{V}_f$ is the ideal $\II_F$ defined by \REF{eqDefIdealeIConEffe}, i.e., the ideal of horizontal $f$--forms. Observe that $D_f^v(M)$ is precisely the module of vector fields belonging to  $\mathcal{V}_f$, and $D_f^v(M)_x=\ker d_xf$.
%
%
%
%
%
%
%
%
\begin{lemma}\label{lemLemmaCheUsaEsercizioSimpaticoDiAlgebraMultilineare}
For any $x\in M$, the space of skew--symmetric $k$--multilinear forms on $D_f^v(M)_x$   identifies with     $(\mathcal{I}_f\cap\Lambda^k(M))_x$.
\end{lemma}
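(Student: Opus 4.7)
The strategy is to reduce the claim, pointwise at $x$, to the short exact sequence of $k$-th exterior powers arising from
\begin{equation*}
0\longrightarrow D_f^v(M)_x\longrightarrow T_xM\xrightarrow{\;d_xf\;} T_{f(x)}N\longrightarrow 0,
\end{equation*}
which is exact because $f$ is regular.

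First I would invoke Lemma~\ref{exEsercizioDoppioSugliIdeali} to obtain two equivalent descriptions of $(\mathcal{I}_f\cap\Lambda^k(M))_x$. By the very definition of $\mathcal{I}_f$, an element $\alpha\in\Lambda^k T_x^*M$ lies in this fibre iff $\alpha(v_1,\dots,v_k)=0$ for all $v_1,\dots,v_k\in D_f^v(M)_x$; and by Lemma~\ref{exEsercizioDoppioSugliIdeali} (which crucially uses regularity), it lies there iff it can be written as a pointwise combination $\sum\beta_i\wedge(d_xf)^*(\eta_i)$ with $\eta_i\in T_{f(x)}^*N$. Hence this fibre is simultaneously the kernel of the restriction map $r\colon\Lambda^k T_x^*M\to\mathrm{Alt}^k(D_f^v(M)_x)$ and the degree-$k$ component of the ideal of $\Lambda^\bullet T_x^*M$ generated by $(d_xf)^*(T_{f(x)}^*N)$.

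The second step is the ``nice exercise in multilinear algebra'' advertised in the lemma's label. Picking a splitting $T_xM=D_f^v(M)_x\oplus H$ of the sequence (with $d_xf|_H$ an isomorphism onto $T_{f(x)}N$) yields the decomposition
\begin{equation*}
\Lambda^k T_x^*M=\bigoplus_{p+q=k}\Lambda^p H^*\otimes\Lambda^q (D_f^v(M)_x)^*,
\end{equation*}
in which the $(0,k)$ summand $\Lambda^k (D_f^v(M)_x)^*=\mathrm{Alt}^k(D_f^v(M)_x)$ sits as a splitting-dependent linear complement to $(\mathcal{I}_f\cap\Lambda^k(M))_x=\bigoplus_{p\geq 1,\,p+q=k}\Lambda^p H^*\otimes\Lambda^q (D_f^v(M)_x)^*$, and $r$ restricts isomorphically on this complement.

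Together the two steps realise the identification asserted by the lemma: $\mathrm{Alt}^k(D_f^v(M)_x)$ and $(\mathcal{I}_f\cap\Lambda^k(M))_x$ appear as the canonical constituents of the short exact sequence
\begin{equation*}
0\longrightarrow (\mathcal{I}_f\cap\Lambda^k(M))_x\longrightarrow \Lambda^k T_x^*M\xrightarrow{\;r\;}\mathrm{Alt}^k(D_f^v(M)_x)\longrightarrow 0,
\end{equation*}
each one uniquely determining the other inside $\Lambda^k T_x^*M$. I expect the main subtle point to be the double description of $(\mathcal{I}_f\cap\Lambda^k(M))_x$ in Step~1, which is exactly where the regularity of $f$ enters via Lemma~\ref{exEsercizioDoppioSugliIdeali}; without regularity only the containment $\langle f^*(\Lambda^+(N))\rangle\subseteq\mathcal{I}_f$ is automatic.
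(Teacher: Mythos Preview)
Your argument is correct and lands on the same linear--algebraic fact the paper uses: at each $x$, the restriction map $r\colon\Lambda^k T_x^*M\to\mathrm{Alt}^k(D_f^v(M)_x)$ is surjective with kernel $(\mathcal{I}_f\cap\Lambda^k(M))_x$, so the two spaces in the statement sit as kernel and cokernel of the same short exact sequence. The paper's proof is much terser: it simply quotes from \cite{LANG} the identity relating $\Ann(L^{\wedge k})\subseteq (V^{\wedge k})^\vee$ and $(L^{\wedge k})^\vee$, and then substitutes $L=\ker d_xf$, $V=T_xM$, without choosing a splitting or invoking Lemma~\ref{exEsercizioDoppioSugliIdeali}.

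Two remarks on your write--up. First, your appeal to Lemma~\ref{exEsercizioDoppioSugliIdeali} in Step~1 is superfluous: the ``kernel of $r$'' description of $(\mathcal{I}_f\cap\Lambda^k(M))_x$ is immediate from the very definition \REF{eqDefIdealeIConEffe} of $\mathcal{I}_f$, and the alternative ``ideal generated by $(d_xf)^*(T_{f(x)}^*N)$'' description is never used afterwards. Second, your explicit splitting in Step~2 is what the paper hides behind the citation of \cite{LANG}; it makes the surjectivity of $r$ transparent, which is the only non--automatic point. So your proof is the same in spirit but more self--contained, at the cost of a bit more machinery than strictly necessary.
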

\begin{proof}
First recall that (see \cite{LANG})
\begin{equation}
\Ann (L^{\wedge k})=\{ \varphi \in (V^{\wedge k})^\vee\ |\ \varphi_{|L^{\wedge k}} = 0\}=(L^{\wedge k})^\vee.
\end{equation}
%
%
If $L=\ker d_xf$ and $V=T_xM$, then  $(V^{\wedge k})^\vee$ identifies with $(T_x^\ast(M))^k$ and $\Ann (L^{\wedge k})$ with $(\mathcal{I}_f\cap\Lambda^k(M))_x$. To conclude the proof, observe that $(L^{\wedge k})^\vee$ is the space of   skew--symmetric $k$--multilinear forms on $D_f^v(M)_x$.
\end{proof}
 Lemma \ref{lemLemmaCheUsaEsercizioSimpaticoDiAlgebraMultilineare} motivates Definition \ref{defFormeVerticaliQualsiasi} below.
 \begin{definition} \label{defFormeVerticaliQualsiasi}
  \emph{$f$--vertical differential forms}, usually denoted by $\overline{\omega}$, are elements of 
\begin{equation}\label{eqFormeVerticali}
\overline{\Lambda}_f(M)=\frac{\Lambda(M)}{\mathcal{I}_f}.
\end{equation}
\end{definition}
Indeed,  $\overline{\Lambda}^k_f(M)$ can be interpreted as the \virg{co--distribution}   on $M$
\begin{equation}\label{eqFamFormeVert}
M\ni x \longmapsto\overline{\Lambda}^k_f(M)_x=\frac{(T^\ast_xM)^{\wedge k}}{\Ann ((\ker d_xf)^{\wedge k})}=(D_f^v(M)_x^{\wedge k})^\vee,
\end{equation}
 dual to the $k$--th power of the vertical distribution  \REF{eqFamDistrVert}.
%
Corollary \ref{corollarioDelCazzo} below is the  \virg{global analog} of  Lemma \ref{lemLemmaCheUsaEsercizioSimpaticoDiAlgebraMultilineare}.
\begin{corollary}\label{corollarioDelCazzo}
 An element $\overline{\omega}\in\overline{\Lambda}_f(M)$, $\omega\in\Lambda^k(M)$,  is naturally interpreted as the $k$--multilinear skew--symmetric    $C^\infty(M)$--multilinear map 
\begin{equation}
\underset{k\textrm{ times}}{\underbrace{D^v(M)\times\cdots\times D^v(M)}}\ni ( X_1,\ldots,X_k )\longmapsto \omega(X_1,\ldots, X_k)\in C^\infty(M).
\end{equation}
\end{corollary}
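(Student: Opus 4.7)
The plan is to define the natural map
\[
\Psi_k:\overline{\Lambda}^k_f(M)\longrightarrow \mathrm{Alt}^k_{C^\infty(M)}\bigl(D^v_f(M),C^\infty(M)\bigr),\qquad \overline{\omega}\longmapsto \bigl((X_1,\ldots,X_k)\mapsto\omega(X_1,\ldots,X_k)\bigr),
\]
picking any representative $\omega\in\Lambda^k(M)$, and then check that $\Psi_k$ is a well-defined isomorphism by reducing everything, fibre by fibre, to Lemma \ref{lemLemmaCheUsaEsercizioSimpaticoDiAlgebraMultilineare}.

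First I would verify that $\Psi_k$ descends to the quotient: by the very definition \REF{eqDefIdealeIConEffe}, any $\omega\in\II_f\cap\Lambda^k(M)$ vanishes on all $k$-tuples of $f$-vertical vector fields, so $\Psi_k(\overline{\omega})$ does not depend on the chosen representative. Skew-symmetry and $C^\infty(M)$-multilinearity of the resulting map are inherited from $\omega$. For injectivity, suppose $\omega(X_1,\ldots,X_k)=0$ for every $X_i\in D^v_f(M)$; evaluating at an arbitrary $x\in M$ shows that $\omega_x$ annihilates $(\ker d_xf)^{\wedge k}$, so by Lemma \ref{lemLemmaCheUsaEsercizioSimpaticoDiAlgebraMultilineare} $\omega_x\in(\II_f\cap\Lambda^k(M))_x$ for all $x\in M$. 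Since we are working with geometric modules over a smooth algebra, fibre-wise membership in a geometric submodule implies global membership, so $\omega\in\II_f$ and $\overline{\omega}=0$.

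For surjectivity, let $\varphi\in\mathrm{Alt}^k_{C^\infty(M)}(D^v_f(M),C^\infty(M))$. Being $C^\infty(M)$-multilinear, $\varphi$ is tensorial and therefore defines a smooth section of the vector bundle $(\mathcal{V}_f^{\wedge k})^\vee$; by \REF{eqFamFormeVert}, this bundle is exactly the one whose module of sections is $\overline{\Lambda}^k_f(M)$, which gives the required preimage. Concretely, one may argue locally using the regularity of $f$: around any point one can choose a smooth distribution $\mathcal{H}$ complementary to $\mathcal{V}_f$ and extend $\varphi$ by zero on tuples with at least one argument in $\mathcal{H}$, obtaining a local $k$-form; a partition of unity then glues these local forms into some $\omega\in\Lambda^k(M)$, and independence of choices is guaranteed modulo $\II_f$ by the already established injectivity.

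The main obstacle is surjectivity, and specifically the passage from a tensorial alternating map defined only on the vertical distribution to an honest $k$-form on the whole of $M$; the regularity hypothesis on $f$ (which guarantees that $\mathcal{V}_f$ is a genuine subbundle of $TM$ admitting local smooth complements, already used in Lemma \ref{exEsercizioDoppioSugliIdeali}) together with a partition-of-unity argument is exactly what is needed to overcome it.
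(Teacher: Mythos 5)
Your proof is correct and follows exactly the route the paper intends: the paper states this Corollary without proof, presenting it as the \virg{global analog} of Lemma \ref{lemLemmaCheUsaEsercizioSimpaticoDiAlgebraMultilineare}, and your argument is precisely that globalization (well-definedness from \REF{eqDefIdealeIConEffe}, injectivity from the fibrewise Lemma, surjectivity from regularity of $f$ via local complements and a partition of unity). You in fact prove slightly more than is asserted --- the full isomorphism with $(D_f^v(M)^{\wedge k})^\vee$ rather than just the natural interpretation --- which is consistent with the identification \REF{eqFamFormeVert} given in the text.
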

Definition \ref{defFormeVerticaliQualsiasi} makes it clear  that  $\overline{\Lambda}_f(M)$ is not only the dual to the module of vertical $k$--multivectors, but it   also inherits the quotient differential algebra structure from   $ {\Lambda(M)}$.  In its turn, such a  differential algebra structure will be used in Section \ref{secFamDiffForm} below to define a differential algebra structure on   families of forms. 
%
%
%
%
%
%
%
\section{Families of differential forms and their natural operations}\label{secFamDiffForm}
Among all geometrical quantities, differential forms are perhaps the most interesting one, due to the rich structure they possess. This is reflected by the variety of equivalent ways in which families of differential forms can be defined.     Namely,   a $\G$--parametrized family of differential forms is
\begin{itemize}
\item an element of $\Gamma(\pi_M^\circ(\xi))$, where  $\xi=\bigoplus_k(\tau^\ast_M)^{\wedge k}$ (Definition \ref{defDefinizioniFamiglieDiSezioniLungoUnaMappa}),
\item a $\pi_M$--horizontal $\Lambda$--type covariant quantity (Definition \ref{defDefinizioneFamiglieOggettiCOvarianti}),
\item an element of the direct sum ${\displaystyle\bigoplus_k} \Lambda^{k,0}(M\times\G)$ (Definition \ref{defFamKappaForme}).
\end{itemize}
Independently on the definition, the symbol $\Lambda_{\pi_M}(M\times\G)$  will be used  for the module of families of differential forms. It should be noticed that none of the  definitions above shows that  $\Lambda_{\pi_M}(M)$ possesses a differential algebra structure---so yet another perspective is needed.\par
To this end, it is enough to notice that the kernel of the correspondence \REF{equazioneBASEconc} which turns {\it any} form into a family coincides with the ideal  $\II_{\pi_\G}$ of $\pi_\G$--horizontal forms. In other words,  a $\G$--parametrized family of differential forms is also
\begin{itemize}
\item an element of the {\it differential algebra} $\overline{\Lambda}_{\pi_\G}(M\times\G)$ of $\pi_\G$--vertical differential forms (see Definition \ref{defFormeVerticaliQualsiasi}).\footnote{The same correspondence between families of quantities and vertical quantities was found in the context of vector fields (see Proposition \ref{propVertDer1}).}
\end{itemize}
The drawback of f the last definition is that, unlike the first three ones, it does not make $\Lambda_{\pi_M}(M\times\G)$ a submodule of $\Lambda (M\times\G)$, but rather an its quotient. Decomposition \REF{eqSplitForms2} allows to treat the last definition on the same footing as the others, thus obtaining a submodule of $\Lambda (M\times\G)$ which is also a differential algebra. More precisely,  
 introduce the canonical projections $p_{i,k-i}:\Lambda^k(M\times \G )\longrightarrow \Lambda^{i,k-1}(M\times \G )$. 
\begin{definition}\label{defHorizOp}
The $M$--\emph{horizontalization} of $k$--forms is   $p_{k,0}:\omega\mapsto\overline\omega\df p_{k,0}(\omega)$, while the  $M$--\emph{horizontalization} is   $p_{0} \df \bigoplus p_{k,0}$.
\end{definition}
%
Put $\overline d \df \left. p_{k,0}\circ d \right|_{\Lambda_{\pi_M}^k(M\times \G )}:\Lambda_{\pi_M}^k(M\times \G )\longrightarrow \Lambda_{\pi_M}^{k+1}(M\times \G )$. Then,  for  any $\omega\in\Lambda_{\pi_M}^k(M\times \G )$, $d^2\omega$ is exactly the $(k+2,0)$--component of the form $d^2\omega$. So, $\overline{d}^2=0$.  
Notice also that the differential $\overline{d}$ on $\Lambda_{\pi_M}(M\times \G )$ is precisely the one induced from the    differential of  $\overline{\Lambda}_{\pi_\G}(M\times\G)$ via the   identification
\begin{equation*}
\overline{\Lambda}_{\pi_\G}(M\times\G)\cong \Lambda_{\pi_M}(M\times \G ),
\end{equation*}
due to decomposition  \REF{eqSplitForms2}. So, a family of differential forms is also
\begin{itemize}
\item an element of  the differential algebra $\left(\Lambda_{\pi_M}(M\times \G ),\overline{d}\right)$.
\end{itemize}
The last point of view is the most complete and useful one, thus motivating Definition \ref{defHorizCompl} below. 
\begin{definition}\label{defHorizCompl}
   $\left(\Lambda_{\pi_M}(M\times \G ), \overline{d}\right)$ is the   \emph{$\pi_M$--horizontal de Rham complex}, and $\overline{d}$ is the {\it horizontal differential}.\end{definition}
%





%


Definition \ref{defHorizCompl} allows to formalize the well--known interchangeability of derivative and integration in the context of families of forms. Indeed,  integration of families (see Section \ref{secIntFam})    becomes more interesting when it is performed on differential forms, since it interacts with the horizontal differential. More precisely, as Corollary \ref{corHorizOp} below shows,   if $\G$ to be compact and $\F\in C^\infty(\G)^\vee$ is   bounded, then the lift $\widetilde\F$ is a $\Lambda(M)$--linear cochain map.\par
To this end, regard  the 1--st de Rham differential $d:C^\infty(M)\longrightarrow\Lambda^1(M)$ as a $\Lambda^1_{\pi_M}(M)$--valued map, and construct its $\pi_\G$--vertical  lift $\widetilde{d}$ as in   Proposition \ref{lemEstDerivCylind}.
\begin{proposition}
  $\widetilde{d}$ coincides with $\overline{d}$ and the   diagram
\begin{equation}\label{eqDiffCommInt1}
\xymatrix{%
C^\infty(M)\ar[r]^{d}&\Lambda^1(M)\\
C^\infty(M\times \G )\ar[u]^{\widetilde\F}\ar[r]^{\widetilde d}&\Lambda_{\pi_M}^1(M\times \G )\ar[u]^{\widetilde\F}%
}
\end{equation}
is commutative.
\end{proposition}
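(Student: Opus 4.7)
The plan is to split the statement into two parts: first, showing that $\widetilde{d}$ coincides with $\overline{d}$ as maps $C^\infty(M\times\G)\to\Lambda^1_{\pi_M}(M\times\G)$, and then deducing commutativity of the diagram \REF{eqDiffCommInt1} as an immediate specialization of Corollary \ref{corIntSmFam}.

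For the identification $\widetilde{d}=\overline{d}$, I would invoke the uniqueness statement of Lemma \ref{lemEstDerivCylind}, applied with $X=d\colon C^\infty(M)\to\Lambda^1(M)\hookrightarrow\Lambda^1_{\pi_M}(M\times\G)$ (the target viewed via the embedding $\omega\mapsto\pi_M^\ast(\omega)$) and $Y=0$ on $C^\infty(\G)$. For $\widetilde{d}$ the requisite properties hold by construction. For $\overline{d}=p_{1,0}\circ d$ one verifies: (i) it is a derivation, because $d$ is a derivation of $C^\infty(M\times\G)$ and $p_{1,0}$ is $C^\infty(M\times\G)$-linear thanks to the direct sum decomposition \REF{eqSplitForms1}; (ii) on $\pi_M^\ast(C^\infty(M))$ one has $d(\pi_M^\ast(f))=\pi_M^\ast(df)\in\Lambda^{1,0}(M\times\G)$, which $p_{1,0}$ leaves invariant, so $\overline{d}$ agrees with $d$ on $C^\infty(M)$ through the stated embedding; (iii) on $\pi_\G^\ast(C^\infty(\G))$ one has $d(\pi_\G^\ast(g))\in\Lambda^{0,1}(M\times\G)=\ker p_{1,0}$, so $\overline{d}$ vanishes there, i.e., it is $\pi_\G$-vertical. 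The uniqueness in Lemma \ref{lemEstDerivCylind} then forces $\overline{d}=\widetilde{d}$.

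With this identification in hand, commutativity of \REF{eqDiffCommInt1} is a direct consequence of Corollary \ref{corIntSmFam}, specialized to the cotangent bundle $\pi=\tau_M^\vee$ and to the derivation $X=d$: then $\Gamma(\pi)=\Lambda^1(M)$, $\Gamma(\pi_M^\circ(\pi))=\Lambda^1_{\pi_M}(M\times\G)$, and the canonical lift $\widetilde{X}$ supplied by the corollary is exactly $\widetilde{d}=\overline{d}$, so that corollary's diagram reduces to \REF{eqDiffCommInt1}. The only genuine subtlety, and the place where I would be careful, is the bookkeeping in steps (i)--(iii), namely matching the embedding $\Lambda^1(M)\hookrightarrow\Lambda^1_{\pi_M}(M\times\G)$ with the notions of \virg{extension} and \virg{vertical lift} employed in Lemma \ref{lemEstDerivCylind}; once this is pinned down, both the identification and the commutativity are mechanical invocations of already established facts.
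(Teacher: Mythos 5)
Your proposal is correct and follows essentially the same route as the paper: one verifies that $\overline{d}=p_{1,0}\circ d$ extends $d$ on $\pi_M^\ast(C^\infty(M))$ and vanishes on $\pi_\G^\ast(C^\infty(\G))$, so that the uniqueness in Lemma \ref{lemEstDerivCylind} forces $\overline{d}=\widetilde{d}$. You are in fact slightly more thorough than the paper, which omits the explicit check that $\overline{d}$ is a derivation and leaves the appeal to Corollary \ref{corIntSmFam} for the commutativity of \REF{eqDiffCommInt1} implicit; both of these additions are sound.
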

\begin{proof} Enough to show   that   $\overline{d}$ fulfills the properties of  the canonical lift of $d$ (see Proposition \ref{lemEstDerivCylind}). Indeed, if $f\in C^\infty(M)$, then $\overline{d}(\pi_M^\ast(f))=\overline{d\pi_M^\ast(f)}=\pi_M^\ast(df)$. So  $\overline{d}$ extends $d$. Moreover, if $g\in C^\infty(\G )$, then $\overline{d}(\pi_\G ^\ast(g))=0$, since $\pi_\G ^\ast(dg)\in\Lambda^{0,1}(M\times \G )$. So, $\overline{d}$ is $\pi_\G $--vertical.
\end{proof}
 \begin{corollary}\label{corHorizOp}
Operator $\widetilde\F$ is a  cochain map from $\Lambda_{\pi_M}(M\times \G )$ to $\Lambda(M)$,  of degree 0 and $\Lambda(M)$--linear.
\end{corollary}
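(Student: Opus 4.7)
The plan is to bootstrap the corollary from the already-established commutative diagram \REF{eqDiffCommInt1} at degree zero, by exploiting the fact that $\Lambda_{\pi_M}(M\times\G)$ is generated as a $C^\infty(M\times\G)$--module by pull-backs $\pi_M^\ast(\Lambda(M))$, together with Leibniz for $\overline{d}$.

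First I would settle the structural claims. Degree $0$ is immediate: Proposition \ref{propEstenzioneDellIntegraleAFAmiglieDISezioni}, applied to $\pi=\bigwedge^k\tau_M^\ast$, gives an extension $\widetilde\F:\Lambda_{\pi_M}^k(M\times\G)\longrightarrow\Lambda^k(M)$ for every $k$, so $\widetilde\F$ preserves degree. For $\Lambda(M)$--linearity, note that $\Lambda_{\pi_M}^k(M\times\G)=C^\infty(M\times\G)\otimes_{C^\infty(M)}\Lambda^k(M)$, so the $C^\infty(M)$--linearity of $\widetilde\F$, tensored with the identity on $\Lambda(M)$, yields $\widetilde\F(\pi_M^\ast(\eta)\wedge\rho)=\eta\wedge\widetilde\F(\rho)$ for $\eta\in\Lambda(M)$ and $\rho\in\Lambda_{\pi_M}(M\times\G)$.

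The core task is the cochain property $\widetilde\F\circ\overline{d}=d\circ\widetilde\F$. I would reduce to the degree-zero case by a generators-and-Leibniz argument. Locally, any $\omega\in\Lambda_{\pi_M}^k(M\times\G)$ is a finite sum of the form $\omega=f_i\,\pi_M^\ast(\eta^i)$, with $f_i\in C^\infty(M\times\G)$ and $\eta^i\in\Lambda^k(M)$. Applying $\overline{d}$ and using that $\overline{d}$ is a graded derivation which agrees with $\pi_M^\ast\circ d$ on $\pi_M^\ast(\Lambda(M))$ (as established just before the corollary), one gets
\begin{equation*}
\overline{d}\omega=\overline{d}(f_i)\wedge\pi_M^\ast(\eta^i)+(-1)^0 f_i\,\pi_M^\ast(d\eta^i).
\end{equation*}
Now apply $\widetilde\F$: the second summand, by $\Lambda(M)$--linearity, yields $\widetilde\F(f_i)\,d\eta^i$; the first, by $\Lambda(M)$--linearity and the commutativity of \REF{eqDiffCommInt1} applied to each $f_i$, yields $d(\widetilde\F(f_i))\wedge\eta^i$. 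Summing and invoking the ordinary Leibniz rule on $M$ gives $d(\widetilde\F(f_i)\,\eta^i)=d(\widetilde\F(\omega))$, as required.

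The only delicate point I anticipate is the passage from the local representation $\omega=\sum f_i\,\pi_M^\ast(\eta^i)$ to a globally valid identity; this is standard, since both $\widetilde\F\circ\overline{d}$ and $d\circ\widetilde\F$ are $\R$--linear differential-type operators between modules of global sections, so local equality on a generating set suffices. Alternatively, one could bypass locality altogether by observing that under the identification $\overline{\Lambda}_{\pi_\G}(M\times\G)\cong\Lambda_{\pi_M}(M\times\G)$ (from \REF{eqSplitForms2}), the operator $\overline{d}$ is the quotient of the de Rham differential, and $\widetilde\F$ commutes with the de Rham differential on generators by \REF{eqDiffCommInt1}, so commutation with $\overline{d}$ propagates to the whole algebra by the derivation property.
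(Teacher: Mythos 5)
Your proposal is correct and follows essentially the same route as the paper: present a horizontal form as $f_i\,\pi_M^\ast(\eta^i)$, deduce $\Lambda(M)$--linearity from the tensor--product structure, and obtain the cochain property by Leibniz for $\overline{d}$ combined with the degree--zero commutative diagram \REF{eqDiffCommInt1}. Your worry about locality is unnecessary here, since $\Lambda^k_{\pi_M}(M\times\G)=C^\infty(M\times\G)\otimes_{C^\infty(M)}\Lambda^k(M)$ already gives such presentations as global finite sums.
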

\begin{proof}
A   horizontal $k$--form $\omega$ can be presented  as $\omega= f_i\pi_M^\ast(\eta^i)$, with $\eta_i\in\Lambda^k(M)$, and it is identified with $ f_i\otimes\eta^i$. If $\eta\in\Lambda(M)$, then $\omega\wedge\eta$ is identified with $ f_i\otimes(\eta^i\wedge\eta)$. So, $\widetilde\F(\omega\wedge\eta)=\F(f_i)\eta^i\wedge\eta=\left(\widetilde\F(\omega)\right)\wedge\eta$. This proves $\Lambda(M)$--linearity of $\widetilde{\F}$.\par
Moreover, $\overline{d}\omega=\overline{df_i\wedge \pi_M^\ast(\eta^i)+f_i \pi_M^\ast(d\eta^i)}=\overline{d}f\wedge \pi_M^\ast(\eta^i)+f_i \pi_M^\ast(d\eta^i)$ and  $\widetilde\F(\overline{d}\omega)=\left(\widetilde\F\circ\overline{d}\right)(f_i)\wedge \eta^i+\F(f_i)(d\eta^i)$. By commutativity of \REF{eqDiffCommInt1}, the last expression equals to $d(\F(f_i))\wedge\eta^i+\F(f_i)(d\eta^i)=d\left(\F(f_i)\eta^i\right)=d\left(\widetilde\F(\omega)\right)$.
\end{proof}
\begin{remark}
When $\F=\int_\G$, the   identity $\widetilde\F(\overline{d}\omega)=d\left(\widetilde\F(\omega)\right)$ gives the  familiar rule
\begin{equation}
\int_\G(d\omega_g)=d\left(\int_\G\omega_g\right)
\end{equation}
for \virg{taking the differential out of the  sign of integration}.
\end{remark}

 \section{The homotopy formula}\label{secHomFor}
%
The departing point to develop differential topology, which is the theory of  algebraic topology based on differential forms, is the so--called \emph{homotopy formula} \REF{eqFurmulaOmotopia}, which allows to  transform a homotopy connecting two maps from $M$ to $N$ into a chain homotopy connecting the induced maps  from $\Lambda(N)$ to $\Lambda(M)$.\footnote{It allows to prove  homotopy invariance of the de Rham cohomology, the most fundamental property of the de Rham cohomology on which all efficient computational algorithms are based. Another basic instrument of computing  de Rham cohomology, the \emph{suspension theorem}, needs a compact--supported version of the theory of families, not discussed here.}
Thanks to the theory of families of forms developed so far, we are able to show that the homotopy formula is a simple and natural consequence of the Cartan formula for the Lie derivative and the form--valued Newton--Leibniz formula \REF{BellissimaFormula}. Our proof differs from the classical ones which can be found in literature (see the classical book \cite{BottTu}) in that, being   purely algebraic, there is no need to check analytically the correctness of all the steps, thus focusing only its conceptual aspects. \par
Throughout this section, $F:M\times \G\to N$ is a smooth homotopy, $\omega$ is a form on $N$, and $\eta=F^*(\omega)$. So, $\eta$ determines the horizontal form $\overline{\eta}$, which in its turn can be regarded as a $\G$--parametrized family of forms  $\{\eta_\g\}$ (see Section \ref{secFamDiffForm} above) . To distinguish the case when $\G$ is $\R$, or an interval, we simply use the index $t$ instead of $\g$.\par
 The derivative of the family $\{\eta_\g\}$ along   a vector field $X\in D( \G )$   is  defined by means of the Lie derivative $L_{\widetilde{X}}$      (see Corollary \ref{corNablaXLX}). Denote by  $\{X(\eta_\g)\}$ the family of forms corresponding to $L_{\widetilde{X}}(\overline{\eta})$.\par
A key remark is that the family $\{X(\eta_\g)\}$ is also obtained by slicing the form $L_{X(F)}(\omega)$, where  ${\widetilde X}$  is the Lie derivative along the $F$--relative vector field $X(F)$.
When   $X=\frac{\dd}{\dd t}$ we just write $\frac{\dd F}{\dd t}$ instead of $X(F)$, and  $\eta'_t$ instead of $X(\eta_t)$.

%
%
To begin with, prove the form--valued Newton--Leibniz formula. 
 Put for simplicity    $\III_a^b=\int_a^b$ and $\I=[a,b]$.
 \begin{lemma}\label{lemLemmaCheNonEraUnLemmaMaLoEDiventato}
\begin{equation}\label{eqN-LFormuFormALG}
{\III_a^b}\circ \nabla_{\frac{\partial}{\partial t}}\circ p_0 =\iota_b^\ast-\iota_a^\ast.
\end{equation}
\end{lemma}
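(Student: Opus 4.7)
The plan is to reduce the form-valued Newton–Leibniz formula \REF{eqN-LFormuFormALG} to its scalar counterpart \REF{BellaFormula} through two successive simplifications: first collapsing the statement onto horizontal forms, then further onto elementary horizontal forms via $\Lambda(M)$-linearity.

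First, I would observe that $\iota_t^\ast$ annihilates every bidegree summand $\Lambda^{i,k-i}(M\times\I)$ with $k-i>0$. Indeed, $\pi_\G\circ\iota_t$ is the constant map $p\mapsto t$, so $\iota_t^\ast\circ\pi_\G^\ast$ vanishes on $\Lambda^+(\I)$, hence on $C^\infty(M)\overline{\otimes}_\R\Lambda^+(\I)$. By \REF{eqSplitForms2}, this means $\iota_t^\ast=\iota_t^\ast\circ p_0$; consequently the right-hand side of \REF{eqN-LFormuFormALG} factors through $p_0$, just as the left-hand side does by construction. It therefore suffices to check the identity for an arbitrary horizontal form $\omega\in\Lambda^{k,0}_{\pi_M}(M\times\I)$.

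Second, according to Definition \ref{defDefinitioneElementiEffeORizzontali}, every such $\omega$ is a finite sum of elementary tensors $f\cdot\pi_M^\ast(\eta)$, with $f\in C^\infty(M\times\I)$ and $\eta\in\Lambda^k(M)$. Both sides of \REF{eqN-LFormuFormALG} are $\Lambda(M)$-linear: the left-hand side by Corollary \ref{corHorizOp}, and the right-hand side because $\iota_t^\ast\circ\pi_M^\ast=\id_{\Lambda(M)}$. So it is enough to verify the identity when $\omega=f\cdot\pi_M^\ast(\eta)$.

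Third, I would compute each side on $\omega=f\cdot\pi_M^\ast(\eta)$. By Corollary \ref{corNablaXLX} combined with Proposition \ref{propVertLX}, the operator $\nabla_{\partial/\partial t}$ acts via the Lie derivative along the canonical lift $\widetilde{\partial/\partial t}$, which is $\pi_M$-vertical and hence annihilates $\pi_M^\ast(\eta)$; the Leibniz rule then gives
$$\nabla_{\frac{\partial}{\partial t}}\bigl(f\cdot\pi_M^\ast(\eta)\bigr)=\frac{\partial f}{\partial t}\cdot\pi_M^\ast(\eta).$$
Applying $\widetilde{\int_a^b}$ and using its $\Lambda(M)$-linearity yields $\widetilde{\int_a^b}\bigl(\tfrac{\partial f}{\partial t}\bigr)\cdot\eta$ on the left. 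On the right, $(\iota_b^\ast-\iota_a^\ast)(f\cdot\pi_M^\ast(\eta))=(\iota_b^\ast(f)-\iota_a^\ast(f))\cdot\eta$. Equality of the two sides then reduces to the scalar formula \REF{BellaFormula} applied to the family of functions $f$, which is already established. The main obstacle—more bookkeeping than a genuine difficulty—is to track the several module structures carefully and ensure that the $\Lambda(M)$-linearities on the two sides line up; everything else follows directly from the structural results of Sections \ref{subsubCovHorQuant}–\ref{secFamDiffForm}.
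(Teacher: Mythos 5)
Your proposal is correct and follows essentially the same route as the paper's proof: decompose $\omega$ via \REF{eqSplitForms2}, observe that the part involving $\pi_\I^\ast(dt)$ is killed both by $p_0$ and by $\iota_t^\ast$, and reduce the identity on horizontal generators $f\,\pi_M^\ast(\eta)$ to the scalar Newton--Leibniz formula \REF{BellaFormula}. The only difference is organizational — you factor both sides through $p_0$ and invoke $\Lambda(M)$-linearity explicitly before computing, whereas the paper computes both sides directly on the decomposed form — but the key computation $\nabla_{\partial/\partial t}\bigl(f\,\pi_M^\ast(\eta)\bigr)=\tfrac{\partial f}{\partial t}\,\pi_M^\ast(\eta)$ and the final appeal to the scalar formula are identical.
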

\begin{proof}
By using decomposition \REF{eqSplitForms2} represent   $\omega$ in the form
\begin{equation}
\omega=\sum_if_i\pi_M^\ast(\omega^i)+\rho\wedge\pi_\I^\ast(dt),\quad \omega^i\in \Lambda(M).
\end{equation}
%
Then, since $\overline{ \rho\wedge\pi_\I^\ast(dt)}=0$, we have
\begin{eqnarray*}
\left({\III_a^b}\circ\nabla_\frac{\dd}{\dd t}\circ p_0\right)(\omega)&=&{\III_a^b}\left(\nabla_\frac{\dd}{\dd t}\left(\overline{\sum_if_i\pi_M^\ast(\omega^i)+\rho\wedge\pi_\I^\ast(dt)}\right)\right)\\
={\III_a^b}\left(\nabla_\frac{\dd}{\dd t}\left(\sum_if_i\pi_M^\ast(\omega^i)\right)\right)&=&{\III_a^b}\left(\frac{\partial f_i}{\partial t}\pi_M^\ast(\omega^i)\right)\\
=\sum_i\left({\III_a^b}\frac{\partial f_i}{\partial t}\right)\omega^i&=&\sum_i\left[\left(\iota_b^\ast-\iota_a^\ast\right)(f_i)\right]\omega^i\\
&=&\sum_i\left(\iota_b^\ast(f_i)\omega^i-\iota_a^\ast(f_i)\omega^i\right).\\
\end{eqnarray*}
On the other hand, 
\begin{eqnarray*}
(\iota_b^\ast-\iota_a^\ast)(\omega)&= &(\iota_b^\ast-\iota_a^\ast)\left(\sum_if_i\pi_M^\ast(\omega^i)+\rho\wedge\pi_\I^\ast(dt)\right)\\
&=&\iota_b^\ast\left(\sum_if_i\pi_M^\ast(\omega^i)+\rho\wedge\pi_\I^\ast(dt)\right)-\iota_a^\ast\left(\sum_if_i\pi_M^\ast(\omega^i)+\rho\wedge\pi_\I^\ast(dt)\right)\\
&=&\sum_i\left(\iota_b^\ast(f_i)\omega^i-\iota_a^\ast(f_i)\omega^i\right).
\end{eqnarray*}
\end{proof}

\begin{corollary}
%
Let $\omega\in\Lambda(M\times [a,b])$. Then
\begin{equation}\label{BellissimaFormula}
\int_a^b \omega'_t = \omega_b-\omega_a.
 \end{equation}
 \end{corollary}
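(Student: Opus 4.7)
The plan is to recognize that this corollary is essentially a notational recasting of Lemma \ref{lemLemmaCheNonEraUnLemmaMaLoEDiventato} in terms of the sliced-family notation $\{\omega_t\}$, rather than the ambient one involving $p_0$ and $\overline{\omega}$. All the conceptual content has already been established; what remains is to match up the symbols.

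First I would unfold the meaning of $\omega'_t$ for an arbitrary (not necessarily horizontal) $\omega \in \Lambda(M\times[a,b])$. By Definition \ref{defFamKappaForme} and the discussion in Section \ref{secFamDiffForm}, the module of $\I$-parametrized families of forms is $\Lambda^{\bullet,0}(M\times\I) = \Lambda^\bullet_{\pi_M}(M\times\I)$, and the correspondence between an arbitrary form on the cylinder and its associated family is realized precisely by the horizontalization $p_0$: the family $\{\omega_t\}_{t\in\I}$ is represented by $\overline{\omega} = p_0(\omega)$. The derivative of this family with respect to $\partial/\partial t$ is, by Corollary \ref{corNablaXLX}, given by $\nabla_{\partial/\partial t}(\overline{\omega})$, and the family it determines is exactly what has been denoted $\{\omega'_t\}$.

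Next, I would interpret the left-hand side $\int_a^b \omega'_t$. Since $\{\omega'_t\}$ is a family of forms on $M$ represented by the horizontal form $\nabla_{\partial/\partial t}(p_0(\omega)) \in \Lambda_{\pi_M}(M\times\I)$, integration over $\I=[a,b]$ means applying the lifted functional $\widetilde{\III_a^b}$, which by Corollary \ref{corHorizOp} maps horizontal forms to ordinary forms on $M$. Thus the left-hand side is
\begin{equation*}
\int_a^b \omega'_t \;=\; \bigl(\III_a^b \circ \nabla_{\partial/\partial t} \circ p_0\bigr)(\omega).
\end{equation*}

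Finally, Lemma \ref{lemLemmaCheNonEraUnLemmaMaLoEDiventato} asserts exactly that $\III_a^b \circ \nabla_{\partial/\partial t} \circ p_0 = \iota_b^\ast - \iota_a^\ast$; evaluating both sides at $\omega$ yields $\iota_b^\ast(\omega) - \iota_a^\ast(\omega) = \omega_b - \omega_a$, which is the right-hand side of \REF{BellissimaFormula}. No step is an obstacle here: the only point demanding care is the translation between the two equivalent perspectives on families of forms (as horizontal forms on the cylinder versus as slice-indexed collections), but this equivalence has been systematically set up in Sections \ref{subsubCovHorQuant}--\ref{secFamDiffForm}, so the argument is essentially just an unpacking of definitions followed by invocation of the lemma.
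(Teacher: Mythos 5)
Your proof is correct and follows essentially the same route as the paper's: unfold the left--hand side as $\left(\III_a^b\circ \nabla_{\partial/\partial t}\circ p_0\right)(\omega)$ and then invoke Lemma \ref{lemLemmaCheNonEraUnLemmaMaLoEDiventato}. The one small divergence is that the paper reads $\{\omega'_t\}$ off the Lie derivative $L_{\dd/\dd t}(\omega)$ of the \emph{un}-horizontalized form and therefore must first establish the commutation identity $p_0\circ L_{\partial/\partial t}=\nabla_{\partial/\partial t}\circ p_0$ of \REF{eqCommNablaLieEOrizzontalizzazione} (which it then reuses in the proof of Theorem \ref{ThHomotopyFormula}), whereas you apply $\nabla_{\partial/\partial t}$ directly to $p_0(\omega)$ via Corollary \ref{corNablaXLX} and so bypass that step.
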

\begin{proof}
First observe that  $\omega_b-\omega_a=(\iota_b^*-\iota_a^*)(\omega)$. On the other hand, the family $\{\omega'_t\}$ corresponds to the differential form $L_{\frac{\dd }{\dd t}}(\omega)$ (see Corollary \ref{corNablaXLX}) and hence
\begin{equation}\label{eqIndicataDalProfConUnSimboloDiREsistenzaElettrica}
\int_a^b \omega'_t = {\III_a^b}(\overline{L_{\frac{\dd }{\dd t}}(\omega)}) = ({\III_a^b}\circ p_0)(\overline{L_{\frac{\dd }{\dd t}}(\omega)})=({\III_a^b}\circ p_0\circ L_{\frac{\dd }{\dd t}})(\omega),
\end{equation}
%
Moreover, 
\begin{equation}\label{eqCommNablaLieEOrizzontalizzazione}
p_0 \circ L_{\frac{\partial}{\partial t}}=\nabla_{\frac{\partial}{\partial t}}\circ p_0.
\end{equation}
Indeed, left and right hand sides operators restricted to horizontal forms coincide with  $\nabla_{\frac{\partial}{\partial t}}$.  Also, these operators annihilate vertical differential forms, since $L_{\frac{\partial}{\partial t}}$ preserves the class of vertical forms (see Proposition \ref{propVertLX}) while $p_0$ annihilates it.\par
Now it follows from \REF{eqIndicataDalProfConUnSimboloDiREsistenzaElettrica} and Lemma \ref{lemLemmaCheNonEraUnLemmaMaLoEDiventato} that
\begin{equation}\label{}
\int_a^b \omega'_t =  ({\III_a^b}\circ p_0\circ L_{\frac{\dd }{\dd t}})(\omega)=  ({\III_a^b}\circ  \nabla_{\frac{\partial}{\partial t}}\circ p_0)(\omega)=(\iota_b^*-\iota_a^*)(\omega)=\omega_b-\omega_a
\end{equation}

\end{proof}
 Formula \REF{BellissimaFormula} is a   generalization of the historical Newton--Leibniz formula, to    smooth homotopies and differential forms. We call it \virg{universal}, since the Newton--Leibniz formula for a particular homotopy $F$ is derived from it by means of $F^*$.\par


%
%
Let now $F:M\times\G\longrightarrow N$ be a smooth homotopy, and $\eta=F^\ast(\omega)$.
\begin{corollary}
It holds
\begin{equation}\label{eqN-LFormuForm}
\int_a^b \eta'_t=\eta_b -\eta_a.
\end{equation}
\end{corollary}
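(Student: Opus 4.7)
The plan is to observe that this corollary is nothing more than formula \REF{BellissimaFormula} applied to the particular form $\eta=F^\ast(\omega)\in\Lambda(M\times [a,b])$. Since the previous corollary was established for an \emph{arbitrary} $\omega\in\Lambda(M\times[a,b])$, the substitution $\omega\rightsquigarrow\eta$ is legitimate and yields the claim at once.

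The only point that deserves a brief verification is that the two pieces of notation appearing in \REF{eqN-LFormuForm}, namely $\eta_t$ and $\eta'_t$, really coincide with the corresponding quantities in \REF{BellissimaFormula}. For the slices: by definition of the slicing map and functoriality of the pull--back,
\begin{equation*}
\eta_t=\iota_t^\ast(\eta)=\iota_t^\ast\circ F^\ast(\omega)=(F\circ\iota_t)^\ast(\omega)=F_t^\ast(\omega),
\end{equation*}
so the family $\{\eta_t\}$ is indeed obtained by slicing $\eta$ as prescribed in Section \ref{secFamDiffForm}. For the derivative: $\eta'_t$ is, by the conventions of this section, the family corresponding to $L_{\widetilde{\dd/\dd t}}(\overline{\eta})$, which is exactly the interpretation of the left--hand side of \REF{BellissimaFormula} (see Corollary \ref{corNablaXLX} and equation \REF{eqIndicataDalProfConUnSimboloDiREsistenzaElettrica}).

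With these identifications in place, \REF{BellissimaFormula} applied to $\eta$ reads
\begin{equation*}
\int_a^b\eta'_t=\eta_b-\eta_a,
\end{equation*}
which is \REF{eqN-LFormuForm}. No additional step is required: the point of the statement is conceptual rather than technical, namely that the ``universal'' form--valued Newton--Leibniz formula specialises to the pull--back of any form on $N$ via a homotopy, producing the identity that will serve as the starting point for the homotopy formula in the remainder of Section \ref{secHomFor}. There is, accordingly, no serious obstacle in this step; the work was all done in proving \REF{BellissimaFormula}.
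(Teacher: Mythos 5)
Your proof is correct and takes exactly the same route as the paper, which simply states that \REF{eqN-LFormuForm} is the particular case of \REF{BellissimaFormula} obtained by substituting $F^\ast(\omega)$ for $\omega$. Your additional check that $\eta_t=\iota_t^\ast(F^\ast(\omega))=F_t^\ast(\omega)$ and that $\eta'_t$ matches the derivative convention of \REF{BellissimaFormula} is a harmless (and welcome) elaboration of the same one-line argument.
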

\begin{proof}
A particular case of \REF{BellissimaFormula}, where $\omega$ is replaced by $F^\ast(\omega)$.
\end{proof}


\begin{remark}
Notice that \REF{eqN-LFormuForm} may be read as  $({\III_a^b}\circ \nabla_{\frac{\partial}{\partial t}}\circ p_0 \circ F^\ast )(\omega)=(F_b^\ast-F_a^\ast)(\omega)$. Since $\omega$ is arbitrary, this implies 
 \begin{equation}\label{eqN-LFormuFormALG-particolare}
{\III_a^b}\circ \nabla_{\frac{\partial}{\partial t}}\circ p_0 \circ F^\ast =F_b^\ast-F_a^\ast.
\end{equation}
  In its turn, \REF{eqN-LFormuFormALG-particolare} is obtained from   \REF{eqN-LFormuFormALG}, by composing  on the right the latter with $F^\ast$. This shows the universality  of  \REF{eqN-LFormuFormALG}.
\end{remark}

%
%
%
%
Let $\{\eta^F_t\}$ be the family of forms on $M$ determined by  $i_{\frac{\dd F}{\dd t}}(\omega)=i_{\frac{\dd }{\dd t}}(F^*(\omega))$. The operator $$h^F:\Lambda(N)\longrightarrow\Lambda(M),\quad \omega \longmapsto \int_a^b \eta^F_t$$ is called the \emph{homotopy operator} associated with the homotopy $F$. Equivalently,
\begin{equation}\label{eqIndicataDalProfConUnSimboloDiREsistenzaElettricaBarrata}
h^F={\III_a^b}\circ  p_0 \circ i_{\frac{\dd F}{\dd t}}.
\end{equation}
Obviously, $h^F$ is a linear operator of degree $-1$.
\begin{theorem}\label{ThHomotopyFormula}
The following homotpy formula takes place.
\begin{equation}\label{eqFurmulaOmotopia}
F_b^\ast-F_a^\ast=[h^F,d].
\end{equation}
\end{theorem}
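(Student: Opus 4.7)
The strategy is to reduce the homotopy formula to the universal form–valued Newton–Leibniz identity \REF{eqN-LFormuFormALG} by means of the Cartan formula applied to $\eta\df F^\ast(\omega)$. Since $h^F\circ d+d\circ h^F$ is the graded commutator of two odd operators, the claim will follow once one shows
\begin{equation*}
h^F(d\omega)+d(h^F(\omega))=F_b^\ast(\omega)-F_a^\ast(\omega)
\end{equation*}
for every $\omega\in\Lambda(N)$.

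The plan is to begin from the Cartan identity $L_{\frac{\partial}{\partial t}}(\eta)=i_{\frac{\partial}{\partial t}}(d\eta)+d(i_{\frac{\partial}{\partial t}}(\eta))$ on $\Lambda(M\times\I)$, rewritten as
\begin{equation*}
i_{\frac{\partial}{\partial t}}(d\eta)=L_{\frac{\partial}{\partial t}}(\eta)-d(i_{\frac{\partial}{\partial t}}(\eta)),
\end{equation*}
and then apply $\III_a^b\circ p_0$ to both sides. The left--hand side yields $h^F(d\omega)$, thanks to the defining identity $i_{\frac{\dd F}{\dd t}}(\omega)=i_{\frac{\partial}{\partial t}}(F^\ast(\omega))$ and the expression \REF{eqIndicataDalProfConUnSimboloDiREsistenzaElettricaBarrata} of $h^F$. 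For the first summand on the right I would invoke the commutation \REF{eqCommNablaLieEOrizzontalizzazione}, getting $\III_a^b\circ p_0\circ L_{\frac{\partial}{\partial t}}=\III_a^b\circ\nabla_{\frac{\partial}{\partial t}}\circ p_0$, and then Lemma \ref{lemLemmaCheNonEraUnLemmaMaLoEDiventato} to conclude that the first term evaluates to $(\iota_b^\ast-\iota_a^\ast)(\eta)=F_b^\ast(\omega)-F_a^\ast(\omega)$.

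The delicate point—and the one I expect to require most care—is the second summand: I must show
\begin{equation*}
\III_a^b\bigl(p_0(d(i_{\frac{\partial}{\partial t}}(\eta)))\bigr)=d\bigl(\III_a^b(p_0(i_{\frac{\partial}{\partial t}}(\eta)))\bigr)=d(h^F(\omega)).
\end{equation*}
The right equality is just the definition of $h^F$. The left equality is subtler: it rests on the fact that for any form $\beta\in\Lambda(M\times\I)$ one has $p_0(d\beta)=\overline{d}(p_0(\beta))$. To see this, split $\beta$ according to the bidegree decomposition \REF{eqSplitForms2} as $\beta=p_0(\beta)+\beta'$, where every term of $\beta'$ lies in some $\Lambda^{i,j}$ with $j\geq 1$; since $d$ raises the total degree by one while a $(i,j)$--form can only produce $(i+1,j)$-- and $(i,j+1)$--components, $d\beta'$ has no $(k+1,0)$--part, so $p_0(d\beta')=0$ and $p_0(d\beta)=p_0(d\,p_0(\beta))=\overline{d}(p_0(\beta))$. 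Once this is established, the $\Lambda(M)$--linear cochain property of $\widetilde{\int_a^b}=\III_a^b$ proved in Corollary \ref{corHorizOp} gives the desired intertwining with $d$.

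Putting the two pieces together produces $h^F(d\omega)=F_b^\ast(\omega)-F_a^\ast(\omega)-d(h^F(\omega))$, which rearranges to \REF{eqFurmulaOmotopia}. Conceptually, the proof is nothing more than Cartan's formula $L=[i,d]$ integrated along $\I$, exactly in the same way that \REF{eqN-LFormuFormALG} is the integrated version of the tautology $L_{\frac{\partial}{\partial t}}=\nabla_{\frac{\partial}{\partial t}}$ on horizontal forms.
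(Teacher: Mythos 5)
Your proof is correct and is essentially the paper's own argument read in the opposite direction: the paper starts from the universal Newton--Leibniz identity \REF{eqN-LFormuFormALG-particolare} and expands $L_{\frac{\dd}{\dd t}}$ via Cartan's formula, while you start from Cartan's formula applied to $\eta=F^\ast(\omega)$ and integrate, using exactly the same ingredients (the commutation \REF{eqCommNablaLieEOrizzontalizzazione}, Lemma \ref{lemLemmaCheNonEraUnLemmaMaLoEDiventato}, the identity $i_{\frac{\dd F}{\dd t}}=i_{\frac{\partial}{\partial t}}\circ F^\ast$, and the cochain property of $\III_a^b\circ p_0$). Your explicit bidegree argument showing $p_0\circ d=\overline{d}\circ p_0$ is a welcome detail that the paper leaves implicit when it invokes Corollary \ref{corHorizOp} to commute $d$ past the integration.
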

\begin{proof}
By combining \REF{eqN-LFormuFormALG-particolare} and \REF{eqCommNablaLieEOrizzontalizzazione} we have
\begin{align}
F_b^\ast-F_a^\ast ={\III_a^b}\circ \nabla_{\frac{\dd}{\dd t}}\circ p_0 \circ F^\ast&={\III_a^b}\circ  p_0 \circ L_{\frac{\dd}{\dd t}}\circ F^*\label{eqEquazioneZeroUndiciDElleVEcchiePagine}\\
&={\III_a^b}\circ p_0 \circ \left( i_{\frac{\dd}{\dd t}}\circ d \circ F^\ast + d\circ  i_{\frac{\dd}{\dd t}}\circ F^\ast\right)  \nonumber\\
&={\III_a^b}\circ p_0 \circ \left( i_{\frac{\dd}{\dd t}}\circ F^\ast\circ d_{N} + d_{M\times [a,b]}\circ  i_{\frac{\dd}{\dd t}}\circ F^\ast\right)  \nonumber\\
&={\III_a^b}\circ p_0 \circ  i_{\frac{\dd F}{\dd t}} \circ d_{N} +{\III_a^b}\circ p_0\circ d_{M\times [a,b]}\circ  i_{\frac{\dd F}{\dd t}}  .\nonumber
\end{align}
On the other hand Corollary \ref{corHorizOp} shows that the  composition
\begin{equation*}
\Lambda^\ast(M\times [a,b])\stackrel{p_0}{\longrightarrow}\Lambda_{\pi_M}^\ast(M\times [a,b])\stackrel{{\III_a^b}}{\longrightarrow}\Lambda^\ast(M)
\end{equation*}
is a  cochain map, i.e., $\III_a^b\circ p_0\circ d_{M\times [a,b]}=d_M\circ\III_a^b\circ p_0$. This fact allows to rewrite formula \REF{eqEquazioneZeroUndiciDElleVEcchiePagine} as
\begin{equation*}
F^\ast_b-F^\ast_a=h^F\circ d_N + d_M \circ h^F.
\end{equation*}
\end{proof}

\section*{Acknowledgements}
This research was partially carried out during the author  post--doc fellowship at the    Department of Mathematics of the  University of Salerno, and was partially supported by       INFN (Gruppo collegato di Salerno). The author    is also thankful to the {\it Istituto Italiano per gli Studi Filosofici}   for indispensable financial support.

\end{document}